\documentclass[12pt]{amsart}
\usepackage{amsmath}
\usepackage{amstext}
\usepackage{amsfonts}
\usepackage{amssymb}
\usepackage{amsthm}
\usepackage{amsrefs}
\usepackage{bm}

\usepackage{microtype}
\usepackage{color,hyperref}
\definecolor{darkblue}{rgb}{0.0,0.0,0.3}
\hypersetup{colorlinks,breaklinks,
            linkcolor=red,urlcolor=red,
            anchorcolor=red,citecolor=red}

\usepackage{tikz-cd}
\usetikzlibrary{arrows}
\usetikzlibrary{matrix,arrows,decorations.pathmorphing}

\theoremstyle{plain}
\newtheorem{thm}{Theorem}[section]

\newtheorem{cor}[thm]{Corollary}
\newtheorem{prop}[thm]{Proposition}
\newtheorem{lem}[thm]{Lemma}

\theoremstyle{definition}
\newtheorem{defn}[thm]{Definition}
\newtheorem{example}[thm]{Example}

\numberwithin{equation}{section}

\newcommand{\B}{{\mathcal{B}}}
\newcommand{\bC}{{\mathbb{C}}}

\newcommand{\bN}{{\mathbb{N}}}

\newcommand{\bR}{{\mathbb{R}}}

\newcommand{\V}{{\mathcal{V}}}


\begin{document}
\title[An infinite quantum ramsey theorem]{An infinite quantum ramsey theorem}

\author[M. Kennedy]{Matthew Kennedy}
\address{Department of Pure Mathematics\\ University of Waterloo\\
Waterloo\\Ontario\\N2L 3G1\\Canada}
\email{matt.kennedy@uwaterloo.ca}

\author[T. Kolomatski]{Taras Kolomatski}
\address{Department of Pure Mathematics and Mathematical Statistics\\ University of Cambridge\\
Cambridge\\ CB3 0WA\\United Kingdom}
\email{tk452@cam.ac.uk}

\author[D. Spivak]{Daniel Spivak}
\address{Department of Mathematics\\ University of Toronto\\
Toronto\\Ontario\\M5S 2E4\\Canada}
\email{daniel.spivak@mail.utoronto.ca}

\begin{abstract}
We prove an infinite Ramsey theorem for noncommutative graphs realized as unital self-adjoint subspaces of linear operators acting on an infinite dimensional Hilbert space. Specifically, we prove that if $\V$ is such a subspace, then provided there is no obvious obstruction, there is an infinite rank projection $P$ with the property that the compression $P \V P$ is either maximal or minimal in a certain natural sense.
\end{abstract}

\subjclass[2010]{Primary 05C55, 05D10, 13C99, 15A60, 46L07 }
\keywords{Ramsey theory, graph theory, operator systems, operator theory, quantum information theory, quantum error correction}
\thanks{First author supported by NSERC Grant Number 418585.}
\thanks{}

\maketitle

\section{Introduction}

The notion of a noncommutative graph first appeared in the work of Duan, Severini and Winter \cite{DSW2013} in quantum information theory as a generalization, for a quantum channel, of the confusability graph of a channel from classical information theory. In this setting, a noncommutative graph takes the form of an operator system, which is a unital self-adjoint subspace of operators acting on a Hilbert space. 

The idea of viewing an operator system as a noncommutative generalization of a graph appears to be much more broadly applicable. For example, the work of Stahlke \cite{S2016} and Weaver \cite{W2015}, as well as many others (see e.g. \cites{PO2015, LPT2017, KM2017}) shows that there are useful generalizations for operator systems of many graph-theoretic constructions and results. 

Recently, Weaver \cite{W2017} proved an analogue of Ramsey's theorem for operator systems acting on finite dimensional spaces. This result has concrete applications, for example, to the theory of quantum error correction. But perhaps more interesting, Weaver's work suggests that operator systems provide a new and potentially very interesting setting for Ramsey theory.

In this paper, we adopt this perspective and prove an analogue of the infinite Ramsey theorem for operator systems acting on infinite dimensional spaces.

For a complex Hilbert space $H$, let $\B(H)$ denote the space of bounded (i.e. continuous) linear operators on $H$. If $H$ is finite dimensional, say $\dim H = n$, then $\B(H)$ can be identified with the $n \times n$ matrices $M_n(\bC)$.

A subspace $\V \subseteq \B(H)$ is said to be an {\em operator system} if it is unital, meaning that it contains the identity operator $I_H$, and self-adjoint, meaning that $A^* \in \V$ whenever $A \in \V$, where $A^*$ denotes the adjoint of $A$. For a general reference on operator systems, we direct the reader to Paulsen's book \cite{P2002}.

An easy way to see an operator system as a generalization as a graph is to consider the operator system associated to a graph $G = (V,E)$ with vertex set $V$ and edge set $E$. Let $H_G$ be a Hilbert space with $\dim H_G = |V|$ and let $(h_v)_{v \in V}$ be an orthonormal basis for $H_G$ indexed by $V$. For vectors $x,y \in H_G$, define a rank one operator $xy^* \in \B(H_G)$ by $xy^*(z) = \langle z, y \rangle x$ for $z \in H_G$. The operator system $\V_G \subseteq \B(H_G)$ corresponding to $G$ is defined by
\[
\V_G = \operatorname{span} \{I_{H_G}, h_v h_w^* : (v,w) \in E \}.
\]

It is clear that $\V_G$ completely encodes $G$. A result of Paulsen and Ortiz \cite{PO2015} implies that if $G' = (V',E')$ is another graph with $|V| = |V'|$, then $G$ and $G'$ are isomorphic if and only if the corresponding operator systems $\V_G$ and $\V_{G'}$ are completely order isomorphic, i.e. isomorphic in the category of operator systems.

Recall that a subset of vertices $C \subseteq V$ is said to be an {\em anticlique} if there are no edges between any pair of vertices in $C$. On the other hand, $C$ is said to be a {\em clique} if every pair of distinct vertices in $C$ is adjacent.

Let $P_C \in \B(H_G)$ denote the projection onto $\{ h_v : v \in C \}$. It is easy to see that $C$ is an anticlique if and only if $P_C \V_G P_C = \bC P_C$, where $P_C \V_G P_C$ denotes the compression $P_C \V_G P_C = \{P_C A P_C : A \in \V_G\}$. If $C$ has finite cardinality, say $|C| = k$, then it is also easy to see that $C$ is a clique if and only if the compression $P_C \V_G P_C$ is maximal, in the sense that $\dim P_C \V_G P_C = k^2$ or equivalently, $P_C \V_G P_C = P_C \B(H_G) P_C$.

Let $\V \subseteq \B(H)$ be an arbitrary operator system. Motivated by the above considerations, for $k \geq 1$ Weaver says that a rank $k$ projection $P \in \B(H)$ is a {\em quantum $k$-anticlique} for $\V$ if $P \V P = \bC P$. On the other hand, Weaver says that $P$ is a {\em quantum $k$-clique} for $\V$ if $P \V P = P \B(H) P$.

Ramsey's theorem \cite{R1930} asserts that for $k \geq 1$ there is $N \geq 1$ such that every graph with at least $N$ vertices has either a $k$-clique or a $k$-anticlique. Weaver's noncommutative Ramsey theorem \cite{W2017} is an analogue of this result for operator systems.

\begin{thm}[Weaver]
Let $\V \subseteq \B(H)$ be an operator system on a finite dimensional Hilbert space $H$. For $k \geq 1$ there is $N \geq 1$ such that if $\dim H \geq N$, then $\V$ has either a quantum $k$-clique or a quantum $k$-anticlique.
\end{thm}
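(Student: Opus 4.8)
The case $k=1$ is immediate: for a unit vector $v$ the rank-one projection $P=vv^*$ satisfies $PAP=\langle Av,v\rangle P$ for all $A$, so $P\V P=\bC P=P\B(H)P$, and $P$ is simultaneously a quantum $1$-clique and a quantum $1$-anticlique. So assume $k\ge 2$. Throughout I will use the elementary fact that if $P\le Q$ are projections then $P(Q\V Q)P=P\V P$; hence a quantum $k$-clique (resp.\ $k$-anticlique) for a compression $Q\V Q$ is one for $\V$. So it suffices to produce, once $\dim H$ is large enough, a projection $Q$ of rank at least $k$ with $Q\V Q=\bC Q$ or $Q\V Q=Q\B(H)Q$: a ``fully trivial'' or a ``fully full'' corner.

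The plan is to split by size. First, if $d:=\dim\V$ is small compared with $\dim H$ I would build a quantum $k$-anticlique greedily. Choose a basis $I,A_1,\dots,A_{d-1}$ of $\V$ with each $A_i=A_i^*$ and, after subtracting scalars, traceless. Process the $A_i$ one at a time, shrinking a subspace $L$ (initially $H$): given $L$, let $\lambda$ be a median eigenvalue of the self-adjoint operator $P_LA_iP_L$ on $L$; then the Hermitian form $P_LA_iP_L-\lambda P_L$ has an isotropic subspace $L'\subseteq L$ of dimension at least $\tfrac12\dim L$, and $P_{L'}A_iP_{L'}=\lambda P_{L'}$. After $d-1$ steps one lands on a subspace of dimension at least $(\dim H)/2^{\,d-1}$ on which $\V$ compresses to $\bC P$; so if $\dim H\ge k\,2^{\,d-1}$ then $\V$ has a quantum $k$-anticlique. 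Equivalently: if $\V$ has no quantum $k$-anticlique, then $\dim\V\ge 1+\log_2(\dim H/k)$, so $\dim\V$ must grow with $\dim H$.

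The substance of the theorem is therefore the opposite case: assuming $\V$ has no quantum $k$-anticlique (so $\dim\V$ is large), produce a quantum $k$-clique. I would attempt this by building the clique coordinate by coordinate, maintaining a flag $0\ne K_1\subsetneq\cdots\subsetneq K_j$ with $\dim K_i=i$ and $P_{K_i}\V P_{K_i}=P_{K_i}\B(H)P_{K_i}$; once $j=k$ we are done. The extension step asks for a unit vector $w\perp K_j$ with $P_{K_j+\bC w}\V P_{K_j+\bC w}$ full, which is a question about whether $\V$ ``connects'' $w$ to $K_j$ richly enough; failure of this, for every $w$ in a large subspace, should force $\V$ to be close to scalar there, contradicting that $\V$ has no quantum $k$-anticlique. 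The main obstacle — and the reason the bound $N$ must grow rapidly in $k$ — is exactly this dichotomy: turning ``$\V$ does not look like a full matrix algebra on any $k$-dimensional corner'' into ``$\V$ looks scalar on a corner of dimension depending only on $k$,'' while controlling how much of $H$ is consumed at each step. Equivalently, one wants a structural statement about operator systems with no quantum $k$-clique — that after a compression of codimension bounded in terms of $k$ such a system becomes scalar on a large block — and then combining it with the bound $\dim\V\ge 1+\log_2(\dim H/k)$ from the previous paragraph caps $\dim H$, which is the theorem.
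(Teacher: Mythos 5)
Your first half is correct: the greedy isotropic-subspace argument does show that an operator system of dimension $d$ on a space of dimension at least $k\,2^{\,d-1}$ has a quantum $k$-anticlique. (The median-eigenvalue choice makes $\max(p,q)\le\lfloor m/2\rfloor$ for the signature $(p,q,z)$ of $P_LA_iP_L-\lambda P_L$, so the maximal totally isotropic subspace has dimension $m-\max(p,q)\ge\lceil m/2\rceil$, and polarization upgrades isotropy of the quadratic form to vanishing of the full compression.) This is a clean and essentially self-contained proof of the ``small $\V$'' case, and it correctly isolates what is left: when $\dim\V\gtrsim\log\dim H$, one must produce a quantum $k$-clique or, failing that, still find an anticlique.

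But the second half is where the theorem actually lives, and you have not proved it. You propose to extend a flag $K_1\subsetneq\cdots\subsetneq K_j$ with each $P_{K_j}\V P_{K_j}$ full, and you say that if no extension exists for every $w$ in a large subspace, this ``should force $\V$ to be close to scalar there, contradicting that $\V$ has no quantum $k$-anticlique.'' That sentence is the entire theorem, and no mechanism is given to make it true. Concretely, passing from $K_j$ to $K_j+\bC w$ requires the compressed system to acquire $2j+1$ new independent entries all at once, and there is no reason a priori that failure of this for all $w$ says anything about scalarity: it says only that the ``row/column/corner'' map $\V\to\B(K_j,\bC w)\oplus\B(\bC w,K_j)\oplus\bC$ is degenerate, which is a much weaker statement. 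Weaver's own Proposition 2.3 (echoed in Example~\ref{ex:weaver} of this paper) exhibits operator systems of unbounded dimension with no quantum $3$-clique, so there can be no naive ``large $\V$ implies clique'' statement, and the greedy extension genuinely does get stuck; the whole point is to analyze how it gets stuck. There is also the bookkeeping issue you flag yourself (how much of $H$ each step consumes) which, left uncontrolled, makes the bound $N$ vacuous.

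For comparison, the route Weaver takes --- and the one this paper adapts to the infinite-dimensional setting --- does not try to build a full corner one vector at a time. Instead one uses a dilation/isometry trick (the finite-dimensional analogue of Lemmas \ref{lem:isometry}--\ref{lem:dilation}): embed $H$ into $H\oplus H$, choose a sequence of small vectors $x_1,x_2,\ldots$ whose rank-one products sweep out $M_m(\bC)$, and build an isometry $V$ with $V^*(A\oplus 0)V=\sum\alpha_{ij}x_ix_j^*$; one only needs the finitely many resulting operators to be \emph{linearly independent}, which is a genericity condition achievable by small perturbations, rather than needing any single compression to be full. This sidesteps exactly the obstruction your flag argument runs into. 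If you want to complete a proof along your lines, the missing ingredient is a structural lemma of the form ``if $\V$ has no quantum $k$-clique then after a bounded compression it becomes (nearly) scalar on a large block,'' and proving that is not easier than proving the theorem itself.
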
 

In order to motivate the definition of an infinite quantum clique, consider an infinite graph $G$ with corresponding operator system $\V_G$ as above. Let $C \subseteq V$ be a subset of vertices and let $P_C$ denote the projection onto $\{ h_v : v \in C \}$ as above. We equip $\B(H_G)$ with the weak (or weak operator) topology, which is the topology of coordinate-wise convergence, meaning that a net of operators $(T_i)$ in $\B(H_G)$ weakly converges to an operator $T \in \B(H_G)$ if and only if $\lim_i \langle T_i x, y \rangle = \langle T x, y \rangle$ for all $x,y \in H_G$. It is easy to check that $C$ is an infinite clique if and only if $P_C$ has infinite rank and $P_C \V_G P_C$ is weakly dense in $P_C \B(H_G) P_C$.

\begin{defn} \label{defn:main}
Let $\V \subseteq \B(H)$ be an operator system on an infinite dimensional Hilbert space $H$. An {\em infinite quantum clique} of $\V$ is an infinite rank orthogonal projection $P \in \B(H)$ with the property that $P\V P = \{P V P : V \in \V\}$ is weakly dense in $P\B(H)P$, i.e. the weak closure of the compression $P\V P$ is maximal. An {\em infinite quantum anticlique} of $\V$ is an infinite rank orthogonal projection $P \in \B(H)$ with the property that $P\V P = \bC P$, i.e. the compression $P \V P$ is minimal. An {\em infinite quantum obstruction} is an infinite rank projection $P \in \B(H)$ with the property that $P \V P$ is finite dimensional, every operator in $P \V P$ can be written as the sum of a scalar multiple of $P$ and a compact operator and $P \V P$ contains a self-adjoint compact operator with range dense in $PH$.
\end{defn}

The infinite Ramsey theorem asserts that a graph on infinitely many vertices has either an infinite anticlique or an infinite clique. The main result in our paper is an analogue of this result for operator systems.

\begin{thm} \label{thm:main}
Let $\V \subseteq \B(H)$ be an operator system on an infinite dimensional Hilbert space $H$. Then it has either an infinite quantum clique, an infinite quantum anticlique or an infinite quantum obstruction.
\end{thm}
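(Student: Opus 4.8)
The plan is to isolate a structural dichotomy for $\V$ and treat the two cases separately, producing an infinite quantum clique on one side and an infinite quantum anticlique or obstruction on the other. Say that $\V$ is \emph{adequate} if for \emph{every} infinite rank projection $P$ the compression $P\V P$ fails to be simultaneously finite dimensional and contained in $\bC P+\mathcal{K}(PH)$, where $\mathcal{K}(PH)$ is the ideal of compact operators on $PH$. Two preliminary reductions: if $\V=\bC I$ then every infinite rank projection is an anticlique, so assume $\V\ne\bC I$; and since $Q\V Q=Q\,(P_0\V P_0)\,Q$ whenever $Q\le P_0$, a clique, anticlique, or obstruction for the operator system $P_0\V P_0$ on $P_0H$ is automatically one for $\V$ on $H$, so we may compress $\V$ at will, and in particular we may assume $H$ is separable. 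The theorem now splits as: (I) an adequate $\V$ has an infinite quantum clique; (II) an inadequate $\V$ has an infinite quantum anticlique or obstruction.

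For (II), pick an infinite rank $P$ with $P\V P$ finite dimensional and contained in $\bC P+\mathcal{K}(PH)$; replacing $H$ by $PH$ and $\V$ by $P\V P$, we may assume $\V$ itself is finite dimensional with $\V\subseteq\bC I+\mathcal{K}(H)$. As $I\notin\mathcal{K}(H)$, each $V\in\V$ decomposes uniquely as $\lambda(V)I+W(V)$ with $W(V)\in\mathcal{K}(H)$, and $V\mapsto W(V)$ sends $\V$ onto a finite dimensional self-adjoint subspace $\W\subseteq\mathcal{K}(H)$ with $Q\V Q=\bC Q+Q\W Q$ for all $Q\le I$. We induct on $d=\dim\W_{\mathrm{sa}}$, the real dimension of the self-adjoint part of $\W$. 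If $d=0$ then $\V=\bC I$ and $I$ is an anticlique. If every $T\in\W_{\mathrm{sa}}$ has finite rank, then the closed span $R$ of $\bigcup\{\operatorname{ran}T:T\in\W_{\mathrm{sa}}\}$ is finite dimensional, so by self-adjointness the infinite rank projection $Q$ onto $R^{\perp}$ satisfies $Q\W Q=0$, making $Q$ an anticlique. Otherwise fix $T\in\W_{\mathrm{sa}}$ of infinite rank; being compact and self-adjoint it has infinitely many positive or infinitely many negative eigenvalues. If it has infinitely many of one sign and only finitely many of the other, the spectral projection $P_0$ onto the eigenvalues of the dominant sign has infinite rank and $P_0TP_0$ is a self-adjoint compact operator with dense range in $P_0H$; since $P_0\V P_0$ is finite dimensional and contained in $\bC P_0+\mathcal{K}(P_0H)$, the projection $P_0$ is an obstruction. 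If $T$ has infinitely many eigenvalues of each sign, pairing the positive eigenvectors with the negative ones and taking suitably weighted combinations produces orthonormal vectors $u_k$ with $\langle Tu_k,u_l\rangle=0$ for all $k,l$; then $P_0:=\sum_k u_ku_k^*$ has infinite rank, $P_0TP_0=0$, so $\dim(P_0\W P_0)_{\mathrm{sa}}<d$, and applying the inductive hypothesis to $P_0\V P_0$ inside $P_0H$ yields an anticlique or obstruction. This completes the induction and establishes (II).

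The heart of the matter is (I). The key reformulation is that $P$ is an infinite quantum clique precisely when no nonzero trace class operator supported on $PH$ annihilates $\V$ under the trace pairing: by the bipolar theorem the weak closure of $P\V P$ equals $\B(PH)$ if and only if the pre-annihilator of $P\V P$ in the trace class of $PH$ is trivial, and that pre-annihilator consists exactly of the trace class operators $\Theta=P\Theta P$ with $\operatorname{tr}(\Theta V)=0$ for all $V\in\V$. Adequacy passes to every corner, and we split into two exhaustive cases. If $P\V P\subseteq\bC P+\mathcal{K}(PH)$ for some infinite rank $P$, compress to it; then $\V=\bC I+\W$ for a self-adjoint $\W\subseteq\mathcal{K}(H)$, and adequacy forces $Q\W Q$ to be infinite dimensional for every infinite rank $Q$. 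Otherwise no corner of $\V$ lies in scalars plus compacts, so every corner contains a self-adjoint operator whose essential spectrum has at least two points, and hence an abundant supply of pairs of orthogonal infinite rank spectral subspaces. In either case one builds recursively an orthonormal sequence $(e_n)$ — using the abundance of compacts in $\W$ in the first case, and the spectral splittings in the second — so that the closed span $PH$ of $\{e_n\}$ supports no nonzero element of $\V^{\perp}$. The main obstacle is the bookkeeping of this recursion: each new vector $e_n$ must be chosen, together with suitable witnessing elements of $\V$, so as to close off one further family of potential annihilators without disturbing those already excluded and while keeping the $e_n$ orthonormal, and it is exactly adequacy — the failure of every corner to be at once finite dimensional and essentially scalar — that leaves enough room at each step.
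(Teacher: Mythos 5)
Your reduction and your Case (II) are sound, and Case (II) is a genuinely different and arguably cleaner route than the paper's: where the paper first passes to a diagonalizable compression (Lemma \ref{lem:compression-compact} and Lemma \ref{lem:diag-finite-dim-case}), you work directly with the spectral theorem for a single self-adjoint compact operator $T$ and induct on the real dimension of $\W_{\mathrm{sa}}$, splitting on whether $T$ has finite rank, one-sided essential spectrum, or two-sided essential spectrum. The pairing trick to produce orthonormal $u_k$ with $\langle Tu_k,u_l\rangle=0$ is a nice way to strictly decrease the dimension without appealing to simultaneous diagonalization. Your dichotomy (``adequate'' vs.\ ``inadequate'') is also more directly tied to the definitions of anticlique/obstruction than the paper's, which pivots instead on diagonalizability of a compression.

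However, Case (I) is not a proof: it is a statement of intent. The bipolar reformulation (clique $\Leftrightarrow$ trivial pre-annihilator in trace class) is correct, but the entire difficulty of the theorem lies in the sentence ``one builds recursively an orthonormal sequence $(e_n)$ \dots so that the closed span supports no nonzero element of $\V^\perp$,'' and you explicitly defer exactly that construction (``the main obstacle is the bookkeeping of this recursion \dots adequacy \dots leaves enough room at each step''). The assertion that adequacy leaves room at each step is precisely what has to be proved, and it is not clear the pre-annihilator can be ``closed off one family at a time'': the trace-class annihilator of $P\V P$ is a norm-closed subspace of a nonseparable dual-like object with no natural enumeration, and a naive greedy recursion on vectors $e_n$ does not obviously converge to a $P$ with trivial annihilator. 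The paper avoids this head-on confrontation entirely. It proves a dilation lemma (Lemma \ref{lem:technical} and Lemma \ref{lem:isometry}): one first finds a sequence $(A_n)\subseteq\V$ with a controlled ``triangular'' or diagonal structure relative to an orthonormal sequence (Lemmas \ref{lem:inf-dim-range}, \ref{lem:exists-nonzero-fin-rank}, \ref{lem:no-fin-rank}, \ref{lem:diag}, \ref{lem:diag-infinite-dim-case-1}), then maps each $A_n$ to an explicit operator $T_n$ on $\ell^2(\bN)$ with quantitative norm decay $\|x_n\|\leq 2^{-(n+1)}$ so that $\{T_n\}$ spans each $M_m(\bC)$ block and hence is weakly dense (Lemmas \ref{lem:fin-supp} and \ref{lem:diag-infinite-dim-case-2}), and finally transports weak density back through the isometry via the polar decomposition argument in Lemma \ref{lem:technical}. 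The careful geometric decay of the $x_n$, and the trick of spending $3n^2-3n+1$ operators to guarantee a nonzero diagonal combination in $M_n(\bC)$ (Lemma \ref{lem:diag}), are the concrete content your sketch is missing. To turn your Case (I) into a proof you would need either to reproduce machinery of this kind or supply a genuinely new mechanism for killing the pre-annihilator; as written there is a real gap.
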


As a corollary to Theorem \ref{thm:main}, we obtain a dichotomy that is slightly simpler to state but provides less information. Before stating it, we require the following definition.

\begin{defn} \label{defn:almost-anticlique}
Let $\V \subseteq \B(H)$ be an operator system on an infinite dimensional Hilbert space $H$. An {\em infinite quantum almost anticlique} of $\V$ is an infinite rank projection $P \in \B(H)$ such that the compression $P \V P$ is finite dimensional and every operator in $P \V P$ can be written as the sum of a scalar multiple of $P$ and a compact operator.
\end{defn}

\begin{cor} \label{cor:main}
Let $\V \subseteq \B(H)$ be an operator system on an infinite dimensional Hilbert space $H$. Then it has either an infinite quantum clique or an infinite quantum almost anticlique.
\end{cor}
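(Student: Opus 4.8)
The plan is to deduce Corollary \ref{cor:main} directly from Theorem \ref{thm:main}, by observing that both the anticlique alternative and the obstruction alternative are special cases of the almost anticlique condition, so that the three-way dichotomy of Theorem \ref{thm:main} collapses to the stated two-way dichotomy.

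First I would apply Theorem \ref{thm:main} to $\V$, obtaining an infinite rank projection $P \in \B(H)$ that is an infinite quantum clique, an infinite quantum anticlique, or an infinite quantum obstruction. In the first case there is nothing to prove. In the second case, $P\V P = \bC P$ is one-dimensional, hence finite dimensional, and every operator in it has the form $\lambda P = \lambda P + 0$ with $0$ compact; thus $P$ satisfies Definition \ref{defn:almost-anticlique} and is an infinite quantum almost anticlique. In the third case, comparing Definition \ref{defn:main} with Definition \ref{defn:almost-anticlique} shows that the defining conditions of an infinite quantum obstruction already include the two requirements for an infinite quantum almost anticlique --- namely that $P\V P$ is finite dimensional and that every element of $P\V P$ is a scalar multiple of $P$ plus a compact operator --- together with the additional requirement that $P\V P$ contain a self-adjoint compact operator with range dense in $PH$. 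Discarding this last condition, $P$ is again an infinite quantum almost anticlique.

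Since these three possibilities exhaust the alternatives of Theorem \ref{thm:main}, this establishes the corollary. I do not expect any genuine obstacle here: the entire content lies in Theorem \ref{thm:main}, and the corollary is purely a matter of unwinding the definitions to see that an infinite quantum obstruction, while carrying strictly more structure than an infinite quantum almost anticlique (the extra compact operator with dense range), is in particular an infinite quantum almost anticlique, and likewise that an infinite quantum anticlique is trivially one as well.
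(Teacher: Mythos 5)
Your argument is correct and is exactly the paper's own: the authors note that Corollary \ref{cor:main} is an immediate consequence of Definitions \ref{defn:main} and \ref{defn:almost-anticlique} together with Theorem \ref{thm:main-end}, i.e.\ that both the anticlique and obstruction alternatives satisfy the almost-anticlique conditions. Your unwinding of the definitions is the intended proof.
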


The next two examples show that there are operator systems on infinite dimensional Hilbert spaces with no infinite quantum anticlique and no infinite quantum clique. By Theorem \ref{thm:main} an operator system with this property necessarily has an infinite quantum obstruction.

\begin{example}
Let $\V \subseteq \B(\ell^2(\bN))$ be any finite dimensional operator system containing a self-adjoint compact operator $K$ with dense range. Since $\V$ is finite dimensional, it clearly has no infinite quantum clique. On the other hand, for any infinite rank orthogonal projection $P \in \B(\ell^2(\bN))$, the compression $PKP$ is nonzero and compact, so $P \V P \ne \bC P$. Therefore, $\V$ also has no infinite quantum anticlique. The identity operator is an infinite quantum obstruction for $\V$.
\end{example}

The next example is an infinite dimensional variant of the example from \cite{W2017}*{Proposition 2.3}. 

\begin{example} \label{ex:weaver}
Let $H = \ell^2(\bN)$ and let $(e_n)$ denote the standard orthonormal basis of $H$. Let $K \in \B(H)$ denote the compact operator defined by
\[
K = \sum_{n \geq 1} \frac{1}{n} e_n e_n^*,
\]
where the sum is taken in the weak topology. Note that $K$ has dense range. Define an operator system $\V \subseteq \B(H)$ by
\[
\V = \operatorname{span}\{I_H, K, e_1 e_n^*, e_n e_1^* : n \geq 1 \}.
\]

Let $P \in \B(H)$ be any infinite rank projection. Then $P \V P$ contains the nonzero compact operator $PKP$, so $P$ is not an infinite quantum anticlique for $\V$. We will show that $P$ is not an infinite quantum clique for $\V$.

First suppose $Pe_1 = 0$. Then for $n \geq 1$, $P e_1 e_n^* P = 0$ and $Pe_n e_1^* P = 0$, so $P \V P = \operatorname{span} \{ P, PKP  \}$. Hence $P$ is an infinite quantum obstruction for $\V$.

Next suppose $Pe_1 \ne 0$. Let $(f_n)$ be an orthonormal basis for $PH$ with $f_1 = Pe_1/\|Pe_1\|$. For $n \geq 1$, let $v_n = Pe_n$. Then $P e_1 e_n^* P = \|Pe_1\| f_1 v_n^*$ and $P e_n e_1^* P = \|Pe_1\| v_n f_1^* $, so
\[
P \V P = \operatorname{span}\{P, PKP, f_1 v_n^*, v_n f_1^* : n \geq 1\}.
\]
To see that $P$ is not an infinite quantum clique for $\V$, note that if $P \V P$ is weakly dense in $P \B(H) P$, then for any infinite rank projection $Q \in \B(H)$ with $Q \leq P$, $Q \V Q$ is weakly dense in $Q \B(H) Q$. In other words, if $P$ is an infinite quantum anticlique for $\V$, then so is $Q$. However, let $Q$ denote the infinite rank projection onto the closure of $\operatorname{span}\{f_n : n \geq 2\}$. Then $Q \leq P$, but for $n \geq 1$, $Q f_1 v_n^* Q = 0$ and $Q v_n f_1^* Q = 0$, so $Q \V Q = \operatorname{span}\{Q, QKQ\}$. Hence $Q$ is an infinite quantum obstruction for $\V$.
\end{example}

In the definition of an infinite quantum clique, it is tempting to replace the condition that $P \V P$ is weakly dense in $P \B(H) P$ with the condition that $P \overline{\V} P = P\B(H)P$, where $\overline{\V}$ denotes the weak closure of $\V$. The next example shows that our main result would not hold with this definition.

\begin{example}
Let $H = \ell^2(\bN \cup \{0\})$ and let $(e_n)$ denote the standard orthonormal basis of $H$. For $n \geq 1$, let
\[
A_n = e_n e_n^* +  n e_0 e_n^* + n e_n e_0^*,
\]
and define an operator system $\V_0 \subseteq \B(H)$ by $\V_0 = \operatorname{span}\{I, A_n : n \geq 1\}$. Then every operator $A \in \V_0$ satisfies $\langle A e_i, e_j \rangle = 0$ for $i,j \geq 1$ with $i \ne j$.

Let $\V$ denote the weak closure of $\V_0$. Then $\V$ is a weakly closed operator system and every operator $B \in \V$ satisfies $\langle B e_i, e_j \rangle = 0$ for $i,j \geq 1$ with $i \ne j$. In particular, every operator in $\V$ can be written as the sum of a diagonal operator and an operator of rank at most two.

For $m \geq 1$, let
\[
T_m =  2m e_0 e_0^* + e_0 e_m^* + e_m e_0^* - 2m e_m e_m^*.
\]
Then $\operatorname{tr}(T_m I) = 0$ and $\operatorname{tr}(T_m A_n) = 0$ for all $n \geq 1$, so $\operatorname{tr}(T_m A) = 0$ for all $A \in \V_0$. Hence $\operatorname{tr}(T_m B) = 0$ for all $B \in \V$, or equivalently,
\[
2m \langle B e_m, e_m \rangle = \langle B e_0, e_m \rangle + \langle B e_m, e_0 \rangle + 2m \langle B e_0, e_0 \rangle.
\]

For $B \in \V$ with $\|B\| \leq 1$, let $B' = B - \langle B e_0, e_0 \rangle I$. Then $\|B'\| \leq 2$ and $\langle B' e_0, e_0 \rangle = 0$, so it follows from above that for $m \geq 1$,
\[
|\langle B' e_m, e_m \rangle| = \frac{1}{2m} |\langle B' e_0, e_n \rangle + \langle B' e_n, e_0 \rangle| \leq \frac{2}{m}.
\]
In particular, $\lim_m \langle B' e_m, e_m \rangle = 0$. Hence $B'$ can be written as the sum of a compact diagonal operator and an operator of rank at most two. In particular, $B'$ is compact. It follows that $\V \subseteq \bC I_H + \mathcal{K}(H)$, where $\mathcal{K}(H)$ denotes the space of compact operators on $H$. Since $\mathcal{K}(H)$ is an ideal in $\B(H)$, this implies that if $P \in \B(H)$ is a projection, then $P \V P \subseteq \bC P + \mathcal{K}(H)$. However, if $P$ has infinite rank, then $\bC P + \mathcal{K}(H)$ is a proper subset of $P \B(H) P$. Hence there is no infinite rank projection $P \in \B(H)$ such that $P \V P = P \B(H) P$.

Next, we claim that $\V$ does not have an infinite quantum anticlique. To see this, suppose for the sake of contradiction that $P \in \B(H)$ is an infinite rank projection such that $P \V P = \bC P$. For $n \geq 1$, the fact that $P A_n P$ is compact implies that $P A_n P = 0$. Hence for nonzero $f \in PH \cap \{e_0\}^\perp$,
\begin{align*}
0 &= \langle A_n f, f \rangle \\
&= \langle f, e_n \rangle \langle e_n, f \rangle + n \langle f, e_n \rangle \langle e_0, f \rangle + n \langle e_0, f \rangle \langle e_n, f \rangle \\
&= |\langle f, e_n \rangle|^2.
\end{align*}
Since $\{e_n : n \geq 1\}$ is an orthonormal basis for $\{e_0\}^\perp$, this implies $f = 0$. Hence $PH \cap \{e_0\}^\perp = 0$. But it is not difficult to show that $PH \cap \{e_0\}^\perp \ne 0$ (see Lemma \ref{lem:intersection}), giving a contradiction.

Finally, we claim that $\V$ does not have an infinite quantum obstruction. To see this, suppose for the sake of contradiction that $P \in \B(H)$ is an infinite rank projection with the property that $P \V P$ is finite dimensional and contains a self-adjoint compact operator $K$ with range dense in $PH$. Since $P \V P$ is finite dimensional, so is $P \V_0 P$, say
\[
P \V_0 P = \operatorname{span}\{P,PA_{n_1}P,\ldots,PA_{n_k}P\}.
\]
But $P \V P$ is contained in the weak closure of $P \V_0 P$ and, being finite dimensional, $P \V_0 P$ is already weakly closed. So $P \V P = P \V_0 P$. In particular, $P \V P$ is spanned by $P$ and finitely many finite rank operators, so it does not contain any compact operators of infinite rank, giving a contradiction.
\end{example}

The notion of a quantum anticlique arises in quantum information theory, specifically in the theory of quantum error correction. Let $H$ and $K$ be Hilbert spaces and let $\mathcal{E} : \B_t(H) \to \B_t(K)$ be a quantum channel, i.e. a completely positive trace preserving map between the spaces $\B_t(H)$ and $\B_t(K)$ of trace class operators on $H$ and $K$ respectively (see e.g. \cite{BKK2009} for details). Then there are operators $(E_i)_{i \in I}$ in $\B(H,K)$ such that
\[
\mathcal{E}(T) = \sum_{i \in I} E_i T E_i^*
\]
for $T \in \B_t(H)$, where the sum is taken in the weak topology. The operator system associated to $\mathcal{E}$ is
\[
\V_\mathcal{E} = \operatorname{span}\{I_H, E_i^* E_j : i,j \in I\}.
\]
In the finite dimensional setting, a quantum anticlique for $\V_\mathcal{E}$ corresponds to an error correcting code for $\mathcal{E}$ \cites{KL1997,KLV2000}. In the infinite dimensional setting, an infinite quantum anticlique also gives rise to an error correcting code for $\mathcal{E}$ \cite{BKK2009}.

In Section \ref{sec:dilation}, we establish a key technical result about dilations of operator systems. To prove our main result, we first reduce to the case of a diagonalizable operator system in Section \ref{sec:reduction}. The case of a diagonalizable operator system is then handled in Section \ref{sec:diagonalizable}. The main result is proved in Section \ref{sec:main}.

\subsection*{Acknowledgements}

The authors are grateful to Nik Weaver for suggesting the inclusion of Corollary \ref{cor:main} and Example \ref{ex:weaver}.

\section{Dilation lemma} \label{sec:dilation}

\begin{lem} \label{lem:technical}
Let $\V \subseteq \B(H)$ be an operator system on an infinite dimensional Hilbert space $H$. Let $K$ be another infinite dimensional Hilbert space and let $T \in \B(H,K)$ be an operator such that $T \V T^*$ is weakly dense in $\B(K)$. Then the projection $P \in \B(H)$ onto $\ker(T)^\perp$ is an infinite quantum clique for $\V$.
\end{lem}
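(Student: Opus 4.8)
The plan is to normalise $T$ by compression and polar decomposition, reduce the assertion to the claim that a positive dense‑range factor can be ``cancelled'' from the density hypothesis, and then attack that via a spectral truncation of the factor.

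\emph{Step 1 (normalisation).} First observe that $\overline{\operatorname{ran} T}=K$: letting $Q\in\B(K)$ be the projection onto $\overline{\operatorname{ran} T}$, every $TVT^{*}$ has range in $QK$, and since $\V=\V^{*}$ the adjoint $TV^{*}T^{*}$ does too, so every element of $T\V T^{*}$ is fixed by the compression $B\mapsto QBQ$; as $\{B:QBQ=B\}$ is weakly closed and contains $\B(K)$ by hypothesis, $Q=I_K$. In particular $T$ has infinite rank (a finite‑rank $T$ would confine $T\V T^{*}$ to a finite‑dimensional subspace of $\B(K)$), so $\ker(T)^{\perp}$ is infinite dimensional and $P$ has infinite rank. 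Since $TP=T$ we have $T\V T^{*}=T(P\V P)T^{*}$ with $T|_{PH}$ injective and of dense range; replacing $H$ by $PH$ and $\V$ by the operator system $P\V P$, we may assume $P=I_H$ and that $T$ is injective with dense range. Taking the polar decomposition $T=U|T|$ with $U$ unitary and $S:=|T|$ positive, injective and of dense range, and applying the weak‑topology homeomorphism $B\mapsto U^{*}BU$, the hypothesis becomes: $S\V S$ is weakly dense in $\B(H)$. It therefore suffices to show $\V$ is weakly dense in $\B(H)$ (equivalently, in the original picture, that $P\V P$ is weakly dense in $P\B(H)P$).

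\emph{Step 2 (spectral truncation).} For $\varepsilon>0$ let $P_{\varepsilon}$ be the spectral projection of $S$ for $[\varepsilon,\|S\|]$, and let $R_{\varepsilon}=(S|_{P_{\varepsilon}H})^{-1}\oplus 0$, a bounded operator with $R_{\varepsilon}S=SR_{\varepsilon}=P_{\varepsilon}$. Conjugating the weak density of $S\V S$ by the weakly continuous map $X\mapsto R_{\varepsilon}XR_{\varepsilon}$ yields that $P_{\varepsilon}\V P_{\varepsilon}=R_{\varepsilon}(S\V S)R_{\varepsilon}$ is weakly dense in $R_{\varepsilon}\B(H)R_{\varepsilon}=P_{\varepsilon}\B(H)P_{\varepsilon}$, for every $\varepsilon>0$. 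Moreover $P_{\varepsilon}\nearrow I_H$ strongly as $\varepsilon\downarrow0$ since $S$ is injective, and $P_{\varepsilon}H\subseteq\operatorname{ran}(S)=\operatorname{ran}(T^{*})$.

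\emph{Step 3 (passing to the limit — the crux).} A subspace of $\B(H)$ is weakly dense exactly when no nonzero finite‑rank operator annihilates it, so one takes a nonzero finite‑rank $\tau$ with $\operatorname{tr}(\tau V)=0$ for all $V\in\V$ — and, using $I_H\in\V=\V^{*}$, may assume $\tau=\tau^{*}$ and $\operatorname{tr}\tau=0$ — and seeks a contradiction with Step 2. This is the step I expect to carry the real weight of the lemma. The naive move is to compress $\tau$ to $P_{\varepsilon}\tau P_{\varepsilon}$ (which does converge to $\tau$ in norm, $\tau$ being finite rank, and lives in $P_\varepsilon H\subseteq\operatorname{ran}(T^{*})$), but $P_{\varepsilon}\tau P_{\varepsilon}$ need not annihilate $P_{\varepsilon}\V P_{\varepsilon}$, and since elements of $\V$ are not uniformly bounded one cannot control $\operatorname{tr}(\tau P_{\varepsilon}VP_{\varepsilon})$ by $\operatorname{tr}(\tau V)=0$; a purely soft functional‑analytic argument will not suffice. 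I would instead exploit the order‑unit structure essentially: write $\tau=\tau_{+}-\tau_{-}$ with $\tau_{\pm}\ge0$ normalised to states, note that $\operatorname{tr}(\tau_{+}\,\cdot\,)$ and $\operatorname{tr}(\tau_{-}\,\cdot\,)$ agree on $\V$, and play this against the weak density of $P_{\varepsilon}\V P_{\varepsilon}$ in $P_{\varepsilon}\B(H)P_{\varepsilon}$ — using positivity together with the fact that compressions by $P_{\varepsilon}$ of a bounded target operator approximate that target as $\varepsilon\downarrow0$ — to force $\tau_{+}=\tau_{-}$, hence $\tau=0$.
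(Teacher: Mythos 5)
Your Steps 1 and 2 reproduce the paper's argument almost verbatim: the paper writes the polar decomposition $T^* = V^*S$ with $V^*$ an isometry, passes to $\V' = V\V V^*\subseteq\B(K)$, and uses the spectral projections $E_t$ of $S$ together with the bounded partial inverses $R_t$ (your $R_\varepsilon$) to show that $E_t\V'E_t$ is weakly dense in $E_t\B(K)E_t$ for all $t>0$. So up to this point you have exactly the paper's proof.

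For the last step, however, the paper does precisely what you call the ``naive move'': it simply asserts that since $(E_t)$ converges weakly to $I_K$, it follows that $\V'$ is weakly dense in $\B(K)$, with no further argument. Your worry---that elements of $\V'$ are not norm-bounded, so $\langle V'E_t x, E_t y\rangle$ is not controlled by $\langle V'x,y\rangle$---is well taken, and I do not believe the order-theoretic repair you sketch can close the gap either, because the lemma as stated appears to fail. Take $H=K$, let $S$ be positive and injective with dense but proper range, pick unit vectors $x\notin\operatorname{ran}(S)$ and $y\in\operatorname{ran}(S)$ that are linearly independent, set $\tau=xx^*-yy^*$ (self-adjoint, finite rank, $\operatorname{tr}\tau=0$), and let $\V=\tau^\perp=\{A\in\B(H):\langle Ax,x\rangle=\langle Ay,y\rangle\}$, an operator system. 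If a finite-rank $\sigma$ annihilates $S\V S$ then $S\sigma S$ annihilates $\V$, hence $S\sigma S\in\bC\tau$; but $\operatorname{ran}(S\sigma S)\subseteq\operatorname{ran}(S)$ while $\operatorname{ran}(\tau)=\operatorname{span}\{x,y\}\not\subseteq\operatorname{ran}(S)$, so $S\sigma S=0$ and therefore $\sigma=0$. Thus $T\V T^*=S\V S$ is weakly dense in $\B(K)$ and $P=I_H$, yet $\V$ is a proper weakly closed hyperplane, not weakly dense. So weak density of $T\V T^*$ alone cannot give the conclusion; the applications in the paper (Lemmas~\ref{lem:fin-supp} and~\ref{lem:diag-infinite-dim-case-2}) in fact produce finite-rank operators in, or in the norm closure of, $V^*\V V$, and it is that extra structure---not weak density plus ``$E_t\to I$''---that should be pushed through the dilation. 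Your instinct in Step 3 is therefore sharper than the paper's own proof at this point, but neither your sketch nor the paper's one-line assertion constitutes a complete argument for the lemma as stated.
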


\begin{proof}
Let $T^* = V^*S$ be the polar decomposition for $T^*$, so that $V^* \in \B(K,H)$ is an isometry and $S = |T^*| = (TT^*)^{1/2} \in \B(K)$. Then $P = V^*V$. Let $\V'$ denote the operator system  $\V' = V \V V^* \subseteq \B(K)$. Then we can write
\[
T \V T^* = S V \V V^* S = S \V' S.
\]

By the spectral theorem, there is a projection-valued measure $E$ on $\bR$ such that
\[
S = \int_0^{\|S\|} s\,dE_s
\]
For $t > 0$, define operators $S_t, R_t \in \B(K)$ by
\[
R_t = \int_t^{\|S\|} s^{-1}\,dE_s, \qquad S_t = \int_t^{\|S\|} s\,dE_s.
\]
Also, let $E_t = E((t,\|S\|))$. Then $R_t S = R_t S_t = E_t$.

For $t > 0$,
\[
E_t \V' E_t = R_t S \V' S R_t = R_t T \V T^* R_t.
\]
Since $T \V T^*$ is weakly dense in $\B(K)$, $E_t T \V T^* E_t$ is weakly dense in $E_t\B(K)E_t$. Since $R_t$ is invertible in $E_t \B(K) E_t$, this implies that $R_t T \V T^* R_t$ (and hence $E_t \V' E_t$) is weakly dense in $E_t \B(K) E_t$.

Since $(E_t)$ weakly converges to $I_K$, it follows that $\V'$ is weakly dense in $\B(K)$. Hence the compression $P \V P = V^* \V' V$ is dense in $P \B(H) P$.
\end{proof}

\begin{lem} \label{lem:dilation}
Let $\V \subseteq \B(H)$ be an operator system on an infinite dimensional Hilbert space $H$. Suppose there is an isometry $V : \ell^2(\bN) \to H \oplus H$ such that $V^* (\V \oplus 0_H) V$ is weakly dense in $\B(\ell^2(\bN))$. Then $\V$ has an infinite quantum clique.
\end{lem}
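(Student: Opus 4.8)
The plan is to reduce Lemma~\ref{lem:dilation} directly to Lemma~\ref{lem:technical}. First I would write the isometry $V\colon\ell^2(\bN)\to H\oplus H$ in block form as $V=\begin{pmatrix}V_1\\V_2\end{pmatrix}$ with $V_1,V_2\in\B(\ell^2(\bN),H)$, so that $Vx=(V_1x,V_2x)$ for $x\in\ell^2(\bN)$ and $V^*(y_1,y_2)=V_1^*y_1+V_2^*y_2$ for $(y_1,y_2)\in H\oplus H$. A one-line computation then shows that for every $A\in\V$,
\[
V^*(A\oplus 0_H)Vx = V^*(AV_1x,0) = V_1^*AV_1x \qquad (x\in\ell^2(\bN)),
\]
i.e.\ $V^*(A\oplus 0_H)V=V_1^*AV_1$. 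Hence the hypothesis that $V^*(\V\oplus 0_H)V$ is weakly dense in $\B(\ell^2(\bN))$ is precisely the statement that $V_1^*\V V_1$ is weakly dense in $\B(\ell^2(\bN))$.

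Next I would set $T=V_1^*\in\B(H,\ell^2(\bN))$ and take the target space $K=\ell^2(\bN)$, which is infinite dimensional. Then $T\V T^*=V_1^*\V V_1$ is weakly dense in $\B(K)$, so Lemma~\ref{lem:technical} applies directly and yields that the projection $P\in\B(H)$ onto $\ker(T)^\perp=\ker(V_1^*)^\perp=\overline{\operatorname{ran}(V_1)}$ is an infinite quantum clique for $\V$. This completes the argument.

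All of the analytic content — the polar decomposition and the spectral-theorem truncation that upgrades weak density ``bounded away from zero'' to genuine weak density — has already been carried out inside Lemma~\ref{lem:technical}, so I do not anticipate any real obstacle here: this lemma is essentially a bookkeeping step recasting the $H\oplus H$ dilation hypothesis into the form required by Lemma~\ref{lem:technical}. The only points that need a moment's care are the block-matrix identity above and the fact that one does not need to separately verify that $\overline{\operatorname{ran}(V_1)}$ is infinite dimensional, since the conclusion of Lemma~\ref{lem:technical} already incorporates that $P$ has infinite rank.
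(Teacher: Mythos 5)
Your proof is correct and takes essentially the same approach as the paper: the paper defines $T\in\B(H,\ell^2(\bN))$ by $Th=V^*(h,0)$, which is exactly your $V_1^*$, and then invokes Lemma~\ref{lem:technical}. You have simply spelled out, via the block decomposition of $V$, the computation $V^*(\V\oplus 0_H)V=V_1^*\V V_1=T\V T^*$ that the paper leaves implicit.
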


\begin{proof}
Define $T \in \B(H,\ell^2(\bN))$ by $Th = V^*(h,0)$ for $h \in H$. Then $T \V T^* = V^* (\V \oplus 0_H) V$ is weakly dense in $\B(\ell^2(\bN))$ by assumption. We conclude from Lemma \ref{lem:technical} that $\V$ has an infinite quantum clique.
\end{proof}

\begin{lem} \label{lem:isometry}
Let $(A_n)$ be a sequence of operators on a countably infinite dimensional Hilbert space $H$ with orthonormal basis $(h_n)$. For $n \geq 1$ and $i,j \geq 1$, let $\alpha_{ij}^n = \langle A_n h_j, h_i \rangle$, so that
\[
A_n = \sum_{i,j \geq 1} \alpha_{ij}^n h_i h_j^*,
\]
where the sum is taken in the weak topology. Let $(x_n)$ be a sequence of vectors in $\ell^2(\bN)$ such that $\|x_n\| \leq 2^{-(n+1)}$ for $n \geq 1$. For $n \geq 1$, let
\[
T_n = \sum_{i,j \geq 1} \alpha_{ij}^n x_i x_j^*,
\] 
where the sum is taken in the weak topology. Then there is an isometry $V : \ell^2(\bN) \to H \oplus H$ such that for $n \geq 1$,
\[
V^* (A_n \oplus 0_H) V = T_n.
\]
\end{lem}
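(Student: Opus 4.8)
The plan is to exhibit $V$ as the standard isometric dilation of a suitable contraction. First I would package the vectors $x_i$ into an operator: define $A \colon \ell^2(\bN) \to H$ by declaring its adjoint on the orthonormal basis, $A^* h_i = x_i$, and extending linearly. The hypothesis $\|x_n\| \le 2^{-(n+1)}$ forces $A^*$ to be bounded with $\|A^*\|^2 \le \sum_{n \ge 1}\|x_n\|^2 \le \sum_{n\ge 1} 4^{-(n+1)} = \tfrac{1}{12} < 1$: for $h = \sum_i c_i h_i$ one has $\|A^* h\| \le \sum_i |c_i|\,\|x_i\| \le \|h\|\,(\sum_i\|x_i\|^2)^{1/2}$ by Cauchy--Schwarz. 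Since $\|A\| = \|A^*\| < 1$, the operator $I - A^*A$ on $\ell^2(\bN)$ is positive, so its positive square root $(I - A^*A)^{1/2}$ exists; composing with the unitary $W \colon \ell^2(\bN) \to H$, $We_n = h_n$, set $B = W(I - A^*A)^{1/2} \colon \ell^2(\bN) \to H$, which satisfies $B^*B = I - A^*A$.

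Next I would define $V \colon \ell^2(\bN) \to H \oplus H$ by $V\xi = (A\xi, B\xi)$. Then $V^*(\eta_1,\eta_2) = A^*\eta_1 + B^*\eta_2$, so $V^*V = A^*A + B^*B = A^*A + (I - A^*A) = I_{\ell^2(\bN)}$ and $V$ is an isometry. For each $n$ we have $(A_n \oplus 0_H)V\xi = (A_n A\xi,\, 0)$, whence
\[
V^*(A_n \oplus 0_H)V = A^* A_n A ,
\]
so it only remains to recognize the right-hand side as $T_n$.

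For that, observe that $x_i = A^* h_i$ gives $x_i x_j^* = A^*(h_i h_j^*)A$ for all $i,j$, so the partial sums of the series defining $T_n$ satisfy $\sum_{i,j \le N}\alpha_{ij}^n x_i x_j^* = A^*\bigl(\sum_{i,j \le N}\alpha_{ij}^n h_i h_j^*\bigr)A$. Since $\sum_{i,j\le N}\alpha_{ij}^n h_i h_j^* \to A_n$ in the weak topology, and both left multiplication by $A^*$ and right multiplication by $A$ are weakly continuous, the right-hand side converges weakly to $A^* A_n A$; this simultaneously shows the weak sum defining $T_n$ converges and equals $A^* A_n A$. (Equivalently, one checks matrix entries directly: $\langle A^* A_n A e_l, e_k\rangle = \langle A_n A e_l, A e_k\rangle = \sum_{i,j}\alpha_{ij}^n\langle e_l, x_j\rangle\overline{\langle e_k, x_i\rangle} = \langle T_n e_l, e_k\rangle$, using $\langle A e_l, h_j\rangle = \langle e_l, A^* h_j\rangle = \langle e_l, x_j\rangle$.)

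I do not anticipate a genuine obstacle: the argument is the familiar ``contractions dilate to isometries'' construction, and the role of the geometric decay of $\|x_n\|$ is precisely to guarantee that $A$ is a strict contraction, so that $(I - A^*A)^{1/2}$ is available and $V$ is an honest isometry. The only points needing a little care are the boundedness estimate for $A^*$ and the weak-convergence bookkeeping used to identify $A^* A_n A$ with the series $T_n$; both are routine.
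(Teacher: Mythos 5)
Your proof is correct and takes essentially the same approach as the paper: both realize the desired isometry $V$ as a dilation of the contraction $h_n \mapsto x_n$, so that $V^*(h_n \oplus 0_H) = x_n$, after which $V^*(A_n \oplus 0_H)V = T_n$ follows from weak continuity of two-sided multiplication by fixed bounded operators. Your column-isometry packaging $V\xi = (A\xi, B\xi)$ with $B^*B = I - A^*A$ is a cleaner rendering of the same dilation that the paper constructs more by hand, via the isometry $S = R \oplus (1-R^*R)^{1/2}$ and an extension of the partial isometry $y_n \mapsto h_n \oplus 0_H$.
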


\begin{proof}
Let $(e_n)$ denote the standard orthonormal basis for $\ell^2(\bN)$. Define a linear operator $R : \ell^2(\bN) \to \ell^2(\bN)$ by $R e_n = x_n$ for $n \geq 1$. Since $\|x_n\| \leq 2^{-(n+1)}$, it is not difficult to check that $\|R\| \leq 1$. Define a linear operator $S : \ell^2(\bN) \to \ell^2(\bN) \oplus \ell^2(\bN)$ by $S = R \oplus (1-R^*R)^{1/2}$. Then $S$ is an isometry, so the sequence $(y_n)$ in $\ell^2(\bN) \oplus \ell^2(\bN)$ defined by $y_n = S e_n$ for $n \geq 1$ is orthonormal. Let $Y$ denote the closure of $\operatorname{span}\{y_n : n \geq 1\}$.

Define an isometry $W : Y \to H \oplus H$ by $W y_n = h_n \oplus 0_H$ for $n \geq 1$. Then we can extend $W$ to an isometry from $\ell^2(\bN)$ to $H \oplus H$ that we continue to denote by $W$.

Let $Q_1 : \ell^2(\bN) \oplus \ell^2(\bN) \to \ell^2(\bN)$ denote the projection onto the first summand and define $V : \ell^2(\bN) \to H \oplus H$ by $V = WQ_1^*$. Then $V$ is an isometry and $V^* (h_n \oplus 0_H) = Q_1 W^* W y_n = Q_1 y_n = x_n$ for $n \geq 1$. Hence
\[
V^* (A_n \oplus 0_H) V = \sum_{i,j \geq 1} \alpha_{ij}^k V^* (h_i \oplus 0_H) (h_j \oplus 0_H)^* V = \sum_{i,j \geq 1} \alpha_{ij}^n x_i x_j^* = T_n.
\]
\end{proof}

\section{Reduction to the diagonalizable case} \label{sec:reduction}

For a Hilbert space $H$ and a (not necessarily closed) subspace $X \subseteq H$, we will write $H \ominus X$ for $H \cap X^\perp$.

\begin{lem} \label{lem:infinite-codim-or-diagonalizable}
Let $\V \subseteq \B(H)$ be an operator system on an infinite dimensional Hilbert space $H$. Then there is an infinite rank projection $P \in \B(H)$ satisfying one of the following conditions:
\begin{enumerate}
\item For all nonzero vectors $x \in PH$, $\dim PH \ominus P\V Px < \infty$.
\item The compression $P \V P$ is diagonalizable.
\end{enumerate}
In particular, if $\dim \V < \infty$, then there is an infinite rank projection $P \in \B(H)$ such that the compression $P \V P$ is diagonalizable.
\end{lem}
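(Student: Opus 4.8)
The plan is to run a greedy recursion of length at most $\omega$: at each stage it will either certify that condition (1) already holds for the current subspace, or it will output one more vector of an infinite ``quantum anticlique'' which in the limit yields condition (2). This is really just the usual proof that a countable graph contains an infinite clique or an infinite independent set, transported to the operator-system setting.

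First I would record the routine translation. For an infinite rank projection $P$ and a nonzero $x \in PH$, put $\V x = \{Ax : A \in \V\}$; since $\langle PAPx, y\rangle = \langle Ax, y\rangle$ for all $y \in PH$, we get $PH \ominus P\V Px = PH \cap (\V x)^\perp$. Hence condition (1) for $P$ says precisely that $\dim\bigl(PH \cap (\V x)^\perp\bigr) < \infty$ for every nonzero $x \in PH$, while condition (2) for $P$ amounts to the existence of an orthonormal basis $(f_n)$ of $PH$ with $\langle A f_m, f_n\rangle = 0$ for all $A \in \V$ and $m \ne n$ --- for then each $PAP = \sum_n \langle A f_n, f_n\rangle\, f_n f_n^*$ is diagonal.

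Now I would construct closed subspaces $H = M_0 \supseteq M_1 \supseteq \cdots$, all infinite dimensional. Given $M_n$: if $\dim\bigl(M_n \cap (\V x)^\perp\bigr) < \infty$ for every nonzero $x \in M_n$, then halt, and take $P$ to be the projection onto $M_n$, which satisfies (1). Otherwise choose a nonzero $x_n \in M_n$ with $M_n \cap (\V x_n)^\perp$ infinite dimensional, set $M_{n+1} = M_n \cap (\V x_n)^\perp$ (again infinite dimensional, and not containing $x_n$ because $I \in \V$), and continue. If the recursion never halts, then for $m < n$ we have $x_n \in M_n \subseteq M_{m+1} \subseteq (\V x_m)^\perp$, so $\langle A x_m, x_n\rangle = 0$ for every $A \in \V$; since $\V$ is self-adjoint this relation is symmetric in $m$ and $n$, so the unit vectors $f_n = x_n/\|x_n\|$ are orthonormal and satisfy $\langle A f_m, f_n\rangle = 0$ for all $A \in \V$ and $m \ne n$, and the projection onto $\overline{\spn}\{f_n : n \ge 0\}$ satisfies (2). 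For the final clause: if $\dim \V < \infty$ then $\dim \V x \le \dim \V < \infty$, so $(\V x)^\perp$, hence also $M_n \cap (\V x)^\perp$, has finite codimension and is infinite dimensional as soon as $M_n$ is; the halting condition can therefore never be met, so the recursion runs forever and we land in case (2).

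I do not expect a genuine obstacle. The points requiring a little care are the invariant that each $M_n$ stays infinite dimensional and the symmetry of the relation $\langle A x_m, x_n\rangle = 0$, which is exactly where self-adjointness of $\V$ is used. The one slightly non-obvious feature is that no transfinite iteration is needed: either the recursion halts at a finite stage, delivering condition (1), or the first $\omega$ vectors it produces already form the infinite anticlique delivering condition (2).
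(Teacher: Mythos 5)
Your proof is correct and follows essentially the same route as the paper's: you greedily construct a nested chain of infinite-dimensional subspaces by repeatedly cutting with $(\V x_n)^\perp$, use $I\in\V$ and self-adjointness to make the resulting vectors orthogonal and to symmetrize $\langle Ax_m, x_n\rangle = 0$, and observe that halting at a finite stage is exactly condition (1) while running to $\omega$ yields the diagonalizing basis for (2). The only cosmetic difference is that the paper phrases this as ``assume no projection satisfies (1), then construct the sequence,'' whereas you build the halt/continue dichotomy into the recursion itself; you also spell out the ``in particular'' clause, which the paper leaves implicit.
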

\begin{proof}
Suppose there is no infinite rank projection $P \in \B(H)$ satisfying the first condition. Let $P_1 = I_H$. By assumption there is $x_1 \in H$ such that $\dim H \ominus \V P_1 x_1 = \infty$. We inductively define a decreasing sequence of infinite rank projections $(P_n)$ in $\B(H)$ and unit vectors $(x_n)$ in $H$ such that for $n \geq 2$, $P_n H = P_{n-1} H \ominus P_{n-1} \V P_{n-1} x_{n-1}$ and $x_n \in P_n H$.

For $k \geq 2$, suppose $P_1,\ldots,P_{k-1}$ and $x_1,\ldots,x_{k-1}$ have been chosen.  Let $P_k \in \B(H)$ denote the projection onto $P_{k-1} H \ominus P_{k-1} \V P_{k-1} x_{k-1}$. Then by assumption $\dim P_k H = \infty$ and there is $x_k \in P_k H$ such that $\dim P_k H \ominus P_k \V P_k x_k = \infty$.

Proceeding inductively, we obtain the sequences $(P_n)$ and $(x_n)$. We claim $\langle A x_k, x_l \rangle = 0$ for $A \in \V$ and $k,l \geq 1$ with $k \neq l$. To see this, first note that since $\V$ is self-adjoint, we can assume $k < l$. Then by construction, $P_k \geq P_l$ so $x_l = P_l x_l = P_k x_l$ and $x_l \in P_k H \ominus P_k \V P_k x_k$. Hence
\[
\langle A x_k, x_l \rangle = \langle A P_k x_k, P_k x_l \rangle = \langle P_k A P_k x_k, x_l \rangle = 0.
\]
It follows that if $P$ is the projection onto the closure of $\operatorname{span}\{x_n : n \geq 1\}$, then the compression $P \V P$ is diagonalizable.
\end{proof}

\begin{lem} \label{lem:intersection}
Let $H$ be an infinite dimensional Hilbert space. Let $X \subseteq H$ be a (potentially non-closed) subspace and let $Y \subseteq H$ be a closed subspace. Suppose $\dim X > \dim Y^\perp$ and $\dim Y^\perp < \infty$. Then $X \cap Y \ne 0$.
\end{lem}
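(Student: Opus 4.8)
The plan is to project $X$ onto $Y^\perp$ and extract a nonzero kernel vector by a dimension count. Let $Q \in \B(H)$ be the orthogonal projection onto the closed subspace $Y^\perp$, and consider the linear map $Q|_X \colon X \to Y^\perp$ obtained by restricting $Q$ to $X$. Its kernel is $X \cap \ker Q = X \cap (Y^\perp)^\perp$, and since $Y$ is closed we have $(Y^\perp)^\perp = Y$, so $\ker(Q|_X) = X \cap Y$. Thus it suffices to show that $Q|_X$ is not injective.

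For this I would use the elementary fact that a linear map from a vector space of dimension strictly greater than $n$ into a vector space of dimension $n$ is never injective: choosing $n+1$ linearly independent vectors in the domain, their images lie in an $n$-dimensional space and hence satisfy a nontrivial linear relation, which produces a nonzero element of the kernel. Applying this with $n = \dim Y^\perp < \infty$ and noting $\dim X > \dim Y^\perp$ by hypothesis, we obtain a nonzero vector in $\ker(Q|_X) = X \cap Y$, as desired. (This argument works verbatim whether $\dim X$ is finite or infinite, and does not require $X$ to be closed.)

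There is no real obstacle here; the only points requiring any care are the use of the closedness of $Y$ to identify $\ker Q = (Y^\perp)^\perp$ with $Y$ — this is the one place the closedness hypothesis enters and it cannot be dropped — and the observation that the finiteness of $\dim Y^\perp$ is precisely what makes the dimension count valid even though $X$ may be infinite dimensional and non-closed.
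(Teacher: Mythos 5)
Your proof is correct and is essentially the paper's argument: the paper decomposes $n+1$ linearly independent vectors of $X$ as $x_i = y_i + z_i$ with $y_i \in Y$, $z_i \in Y^\perp$, and finds a dependence among the $z_i$, which is exactly your observation that the restriction of the projection onto $Y^\perp$ to $X$ cannot be injective. The only cosmetic difference is that you phrase the dimension count abstractly in terms of a non-injective linear map rather than carrying out the decomposition explicitly.
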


\begin{proof}
Let $n = \dim Y^\perp$ and let $x_1,\ldots,x_{n+1} \in X$ be linearly independent. For each $k$, write $x_i = y_i + z_i$ for $y_k \in Y$ and $z_k \in Y^\perp$. Since $\dim Y^\perp = n$, the vectors $z_1,\ldots,z_{n+1}$ are linearly dependent, and hence there are scalars $\alpha_1,\ldots,\alpha_{n+1} \in \bC$, not all zero, such that $\alpha_1 z_1 + \cdots + \alpha_{n+1} z_{n+1} = 0$. Since $x_1,\ldots,x_{n+1}$ are linearly independent, $0 \ne \alpha_1 x_1 + \cdots + \alpha_{n+1} x_{n+1} \in Y$.
\end{proof}

\begin{lem} \label{lem:inf-dim-range}
Let $\V \subseteq \B(H)$ be an operator system on an infinite dimensional Hilbert space $H$ such that for all nonzero vectors $h \in H$, $\dim \V h = \infty$. Then there is a sequence $(A_n)$ in $\V$ and an orthonormal sequence $(x_n)$ in $H$ such that for each $n$, $\langle A_n x_n, x_{n+1} \rangle = 1$ and $\langle A_k x_i, x_j \rangle = 0$ if $\max\{i,j\} > n+1$ and $i \ne j$.
\end{lem}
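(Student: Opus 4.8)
The plan is to construct the sequences $(x_n)$ and $(A_n)$ by induction, carrying the following hypothesis at stage $m \ge 1$: there are orthonormal vectors $x_1, \dots, x_m \in H$ and operators $A_1, \dots, A_{m-1} \in \V$ such that $\langle A_k x_k, x_{k+1} \rangle = 1$ for $1 \le k \le m-1$, and $\langle A_k x_i, x_j \rangle = 0$ whenever $1 \le k \le m-1$, $i, j \in \{1, \dots, m\}$, $i \ne j$, and $\max\{i,j\} > k+1$. For $m = 1$ it suffices to pick any unit vector $x_1$; there are no operators yet, so both conditions are vacuous.

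For the inductive step, suppose stage $m$ has been reached, and set
\[
F = \spn\bigl( \{x_1, \dots, x_m\} \cup \{A_k x_i : 1 \le k \le m-1,\ 1 \le i \le m\} \cup \{A_k^* x_i : 1 \le k \le m-1,\ 1 \le i \le m\} \bigr),
\]
a finite-dimensional, hence closed, subspace of $H$; let $Q \in \B(H)$ be the orthogonal projection onto $F^\perp$. Since $x_m$ is a unit vector, the hypothesis on $\V$ gives $\dim \V x_m = \infty > \dim F$, so $\V x_m \not\subseteq F$ and there is $A \in \V$ with $A x_m \notin F$, equivalently $Q A x_m \ne 0$. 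Put $c = \|Q A x_m\| > 0$ and define $A_m = c^{-1} A \in \V$ and $x_{m+1} = Q A_m x_m$. Then $x_{m+1}$ is a unit vector in $F^\perp$, so it is orthogonal to $x_1, \dots, x_m$, and moreover
\[
\langle A_m x_m, x_{m+1} \rangle = \langle A_m x_m, Q A_m x_m \rangle = \| Q A_m x_m \|^2 = 1 .
\]

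To verify the vanishing condition at stage $m+1$, note that the only new pairs $(i,j)$ are those with $m+1 \in \{i,j\}$; since $\max\{i,j\} \le m+1$ for $i,j \le m+1$, the condition is triggered only when $k \le m-1$, and then, for $i \le m$, it requires $\langle A_k x_i, x_{m+1} \rangle = 0$ and $\langle A_k x_{m+1}, x_i \rangle = 0$. Both hold because $A_k x_i \in F$, $A_k^* x_i \in F$, $x_{m+1} \in F^\perp$, and $\langle A_k x_{m+1}, x_i \rangle = \langle x_{m+1}, A_k^* x_i \rangle$. In particular no constraint beyond $\langle A_m x_m, x_{m+1} \rangle = 1$ is placed on $A_m$, consistently with the fact that $\max\{m, m+1\} = m+1$ does not exceed $m+1$. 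This completes stage $m+1$, and iterating produces orthonormal $(x_n)$ together with $(A_n)$ in $\V$ satisfying $\langle A_n x_n, x_{n+1} \rangle = 1$ for all $n$ and $\langle A_k x_i, x_j \rangle = 0$ whenever $i \ne j$ and $\max\{i,j\} > k+1$; since $k \le n$ implies $k+1 \le n+1$, this last statement in particular yields $\langle A_k x_i, x_j \rangle = 0$ whenever $k \le n$, $i \ne j$, and $\max\{i,j\} > n+1$, which is the assertion.

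I do not anticipate a real obstacle: the argument is elementary, and the only point that needs care is the bookkeeping of which inner products must vanish at each stage. The key observation is that taking $x_{m+1}$ in the orthogonal complement of $F$ simultaneously secures orthonormality of the new vector and every off-diagonal identity that first becomes relevant at the next stage, while the hypothesis $\dim \V x_m = \infty$ leaves enough room to also arrange $\langle A_m x_m, x_{m+1} \rangle = 1$ by an appropriate scaling of a single operator in $\V$.
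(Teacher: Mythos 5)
Your proof is correct and follows essentially the same strategy as the paper's: an inductive construction in which $x_{m+1}$ is forced into the orthogonal complement of the finite-dimensional space $F$ spanned by the previously chosen vectors together with $A_k x_i$ and $A_k^* x_i$, using $\dim \V x_m = \infty > \dim F$ to guarantee a suitable new operator. The only cosmetic difference is that the paper picks a unit vector $x_{m+1} \in \V x_m \cap F^\perp$ (via its Lemma \ref{lem:intersection}) and then $A_m$ with $A_m x_m = x_{m+1}$ exactly, whereas you take $x_{m+1}$ to be the normalized projection $Q A_m x_m$ of $A_m x_m$ onto $F^\perp$; both achieve $\langle A_m x_m, x_{m+1} \rangle = 1$, and the off-diagonal vanishing at later stages is handled identically.
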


\begin{proof}
We inductively construct the sequences $(A_n)$ in $\V$ and $(x_n)$ in $H$ such that for $n \geq 1$, $A_n x_n = x_{n+1}$ and
\[
x_{n+1} \in \V x_n \cap \{x_i,\ A_j x_i,\ A_j^* x_i : 1 \leq i \leq n \text{ and } 1 \leq j \leq n-1 \}^\perp.
\]

Let $x_1 \in H$ be any unit vector. Then by assumption $\dim \V x_1 = \infty$, so by Lemma \ref{lem:intersection} there is a unit vector $x_2 \in \V x_1 \cap \{x_1\}^\perp$. Choose $A_1 \in \V$ such that $A_1 x_1 = x_2$.

For $k \geq 2$, suppose $A_1,\ldots,A_{k-1}$ and $x_1,\ldots,x_k$ have been chosen. By assumption $\dim \V x_k = \infty$, so by Lemma \ref{lem:intersection} there is a unit vector
\[
x_{k+1} \in \V x_k \cap \{x_i,\ A_j x_i,\ A_j^* x_i : 1 \leq i \leq k \text{ and } 1 \leq j \leq k-1 \}^\perp.
\]
Choose $A_k \in \V$ such that $A_k x_k = x_{k+1}$.

Proceeding inductively, we obtain the sequences $(A_n)$ and $(x_n)$. For $n \geq 1$, $\langle A_n x_n, x_{n+1} \rangle = \langle x_{n+1}, x_{n+1} \rangle = 1$. Finally, suppose $k,l \geq1$ with $\max\{k,l\} > n+1$ and $k \ne l$. If $k < l$ then $x_l \in \{A_n x_k\}^\perp$. Otherwise, if $k > l$, then $x_k \in \{A_n^* x_l\}^\perp$. Either way, $\langle A_n x_k, x_l \rangle = 0$.
\end{proof}

\begin{lem} \label{lem:exists-nonzero-fin-rank}
Let $\V \subseteq \B(H)$ be an operator system on a countably infinite dimensional Hilbert space. Let $(A_n)$ be a sequence in $\V$ and let $(h_n)$ be an orthonormal basis for $H$ such that for $n \geq 1$, $\langle A_n h_n, h_{n+1} \rangle = 1$ and $\langle A_n h_i, h_j \rangle  = 0$ if $\max\{i,j\} > n+1$ and $i \ne j$. Suppose that for all $N \geq 1$, there is a nonzero finite rank operator in $\operatorname{span}\{A_n : n \geq N \}$. Then there is a sequence $(B_n)$ of self-adjoint operators in $\V$ and an orthonormal sequence $(f_n)$ in $H$ such that for $n \geq 1$, $\langle B_n f_n, f_n \rangle = 1$ and $\langle B_n f_i, f_j \rangle = 0$ if $\max\{i,j\} > n$.
\end{lem}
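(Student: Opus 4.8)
The plan is to distill from the hypothesis a supply of \emph{self-adjoint, finitely supported} operators in $\V$ (``finitely supported'' meaning that the matrix $\bigl(\langle B h_i, h_j\rangle\bigr)$ has only finitely many nonzero entries), each carrying a nonzero off-diagonal entry arbitrarily far out, and then to build $(f_n)$ and $(B_n)$ by an induction whose successive stages move steadily out along the basis $(h_n)$.

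First I would record the structural consequences of the staircase hypothesis on $(A_n)$. Fix $N$, pick a nonzero finite rank $F \in \spn\{A_n : n \geq N\}$, write $F = \sum_{n=N}^{M} c_n A_n$ with $c_M \neq 0$, and rescale so $c_M = 1$. Since $\langle A_n h_i, h_j\rangle = 0$ whenever $i \neq j$ and $\max\{i,j\} > n+1$, and every $n$ appearing satisfies $n \le M$, we get $\langle F h_i, h_j\rangle = 0$ whenever $i \neq j$ and $\max\{i,j\} > M+1$; equivalently $(I-P)F(I-P)$ is a diagonal operator, where $P$ is the projection onto $\spn\{h_1,\dots,h_{M+1}\}$. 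But $(I-P)F(I-P)$ is a compression of the finite rank operator $F$, hence is finite rank, and a finite rank diagonal operator has only finitely many nonzero diagonal entries; so $F$ is finitely supported. The same vanishing gives $\langle F h_M, h_{M+1}\rangle = \langle A_M h_M, h_{M+1}\rangle = 1$, so $F$ has a nonzero entry in position $(M,M+1)$ with $M \ge N$. Finally one of the self-adjoint operators $F + F^*,\ i(F-F^*) \in \V$ still has a nonzero $(M,M+1)$ entry, since these entries are $1 + \overline{\langle F h_{M+1}, h_M\rangle}$ and $i\bigl(1 - \overline{\langle F h_{M+1}, h_M\rangle}\bigr)$ and cannot both vanish. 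Thus: for every $N\ge 1$ there is a self-adjoint finitely supported $G \in \V$ with $\langle G h_M, h_{M+1}\rangle \ne 0$ for some $M \ge N$, and such a $G$ is finite rank with $\spn\{h_i : i > L\} \subseteq \ker G$ and $\operatorname{ran} G \subseteq \spn\{h_i : i \le L\}$ for some $L$.

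Next I would construct $(f_n)$ and $(B_n)$ inductively, keeping the invariants that each $B_k$ is self-adjoint and finitely supported in $\V$, each $f_k$ is a unit vector with finite support, the $f_k$ are orthonormal, and $\langle B_k f_k, f_k\rangle = 1$. At stage $n$, choose $L'$ exceeding every coordinate occurring in the supports of $f_1,\dots,f_{n-1}$ and of $B_1,\dots,B_{n-1}$, and apply the previous paragraph with some $N > L'$ to obtain a self-adjoint finitely supported $G \in \V$ with $\langle G h_M, h_{M+1}\rangle \ne 0$, $M \ge N > L'$. Set $W = \overline{\spn}\{h_i : i > L'\}$. Then $P_W G P_W$ is self-adjoint, finite rank, and nonzero (its $(M,M+1)$ entry survives, since $h_M, h_{M+1} \in W$), so it has a nonzero real eigenvalue $\lambda$ with a unit eigenvector $f_n$ in its range; that range lies in $W$ and in $\operatorname{ran} G \subseteq \spn\{h_i : i \le L\}$, so $f_n \in W$ and $f_n$ is finitely supported. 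Put $B_n = \lambda^{-1} G \in \V$: it is self-adjoint, finitely supported, and $\langle B_n f_n, f_n\rangle = \lambda^{-1}\langle P_W G P_W f_n, f_n\rangle = 1$ since $P_W f_n = f_n$. Because $f_n \in W$ it is orthogonal to $f_1,\dots,f_{n-1}$ and lies in $\ker B_k$ for every $k < n$ (each such $B_k$ being supported in coordinates $\le L'$).

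To finish I would check that $\langle B_n f_i, f_j\rangle = 0$ when $\max\{i,j\} > n$: one of the indices, say $i$ (the case of $j$ is identical after using $B_n^* = B_n$), then satisfies $i > n$, and $f_i$ was chosen at a later stage inside $\ker B_n$, so $B_n f_i = 0$; together with $\langle B_n f_n, f_n\rangle = 1$ this is the assertion. The one step carrying real content is the observation in the second paragraph that finite rank together with ``diagonal off a finite corner'' forces finite support — it is this that makes the kernels of the $B_k$ contain a cofinite tail of the basis, which is exactly what lets the induction be pushed out to infinity; everything else is bookkeeping.
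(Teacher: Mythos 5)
Your argument is correct and follows the same overall strategy as the paper: extract self-adjoint, finitely supported operators from the tails $\spn\{A_n : n \geq N\}$, and build $(B_n)$ and $(f_n)$ by an induction that pushes the supports strictly outward along $(h_n)$. There is, however, one place where you are more careful than the paper, and it is worth flagging. At stage $k$ the paper takes $B_k = \operatorname{Re}(F_k)$ if that is nonzero and otherwise $B_k = \operatorname{Im}(F_k)$, and then asserts ``by construction'' that $(P_{N_k} - P_{N_{k-1}}) B_k (P_{N_k} - P_{N_{k-1}}) \ne 0$. That assertion does not follow from the stated choice rule: $\operatorname{Re}(F_k)$ can be nonzero yet supported entirely inside $\spn\{h_1,\dots,h_{N_{k-1}}\}$ even when $F_k \in \spn\{A_j : j > N_{k-1}\}$ --- for instance, an $A_m$ of the form $h_{m+1}h_m^* - h_m h_{m+1}^* + h_1 h_1^*$ with $m > N_{k-1}$ satisfies the staircase hypothesis and has $\operatorname{Re}(A_m) = h_1 h_1^*$, which is nonzero but compresses to zero past index $N_{k-1}$ --- and in that case the paper's choice of $f_k$ in $\spn\{h_i : N_{k-1} < i \leq N_k\}$ cannot be made. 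You avoid this by selecting whichever of $F + F^*$ and $i(F - F^*)$ retains a nonzero $(M,M+1)$ entry with $M$ beyond the cutoff; as you note, those entries are $1 + \bar z$ and $i(1 - \bar z)$ for $z = \langle F h_{M+1}, h_M\rangle$, so they cannot both vanish, and it is precisely this surviving far-out entry that makes the compression $P_W G P_W$ nonzero. The remaining differences (taking an eigenvector of the finite-rank self-adjoint compression instead of merely a unit vector with nonzero quadratic form, and tracking supports explicitly rather than the nested projections $P_{N_k}$) are stylistic.
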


\begin{proof}
We inductively construct the sequences $(B_n)$ in $\V$ and $(f_n)$ in $H$ along with a strictly increasing sequence $(N_n)$ in $\bN$ such that $B_n \in \operatorname{span}\{A_j,A_j^* : j > N_n \}$, $f_n \in \operatorname{span}\{h_i : N_{n-1} < i < N_n \}$, $\langle B_n f_n, f_n \rangle = 1$ and $\langle B_n f_i, f_j \rangle = 0$ if $\max\{i,j\} > n$.

For $N \geq 1$, let $P_N$ denote the projection onto $\operatorname{span}\{h_n : 1 \leq n \leq N\}$. By assumption there is a nonzero finite rank operator $F_1 \in \operatorname{span}\{A_j : j \geq 1 \}$. Choose $N_1 \geq 1$ such that $F_1 = P_{N_1} F_1 P_{N_1}$. If $\operatorname{Re}(F_1) \ne 0$, then let $B_1 = \operatorname{Re}(F_1)$. Otherwise, let $B_1 = \operatorname{Im}(F_1)$. Then $0 \ne B_1 = P_{N_1} B_1 P_{N_1}$. Hence there is unit vector $f_1 \in H$ such that $\langle B_1 f_1, f_1 \rangle \ne 0$. We can multiply $B_1$ by a scalar if necessary to ensure that $\langle B_1 f_1, f_1 \rangle = 1$.

For $k \geq 2$, suppose $B_1,\ldots,B_{k-1}$, $f_1,\ldots,f_{k-1}$ and $N_1,\ldots,N_{k-1}$ have been chosen. By assumption there is a nonzero finite rank operator $F_k \in \operatorname{span}\{A_j : j > N_{k-1}\}$. If $\operatorname{Re}(F_k) \ne 0$ then let $B_k = \operatorname{Re}(F_k)$. Otherwise, let $B_k = \operatorname{Im}(F_k)$. Choose $N_k \in \bN$ such that $B_k = P_{N_k} B_k P_{N_k}$.

By construction, $(P_{N_k} - P_{N_{k-1}}) B_k (P_{N_k} - P_{N_{k-1}}) \ne 0$. Hence there is a unit vector $f_k \in \operatorname{span} \{h_i : N_{k-1} < i \leq N_k \}$ such that $\langle B_k f_k, f_k \rangle \ne 0$. We can multiply $B_k$ by a scalar if necessary to ensure that $\langle B_k f_k, f_k \rangle = 1$.

Proceeding inductively, we obtain the sequences $(B_n)$ and $(f_n)$. For $n \geq 1$, $\langle B_n f_n, f_n \rangle = 1$. Furthermore, for $k,l \geq 1$ with $\max\{k,l\} \geq n$, $f_k,f_l \perp P_{N_n}H$. Since $B_n = P_{N_n} B_n P_{N_n}$, it follows that $\langle B_n f_k, f_l \rangle = 0$.
\end{proof}

The proof of the next lemma is inspired by the proof of Lemma \cite{W2017}*{Lemma 4.3}. 

\begin{lem} \label{lem:fin-supp}
Let $\V \subseteq \B(H)$ be an operator system on a countably infinite dimensional Hilbert space. Let $(A_n)$ be a self-adjoint sequence of operators in $\V$ and let $(h_n)$ be an orthonormal basis for $H$ such that for $n \geq 1$, $\langle A_n h_n, h_n \rangle = 1$ and $\langle A_n h_i, h_j \rangle = 0$ if $\max\{i,j\} > n$. Then $\V$ has an infinite quantum clique.
\end{lem}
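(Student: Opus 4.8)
The plan is to produce the infinite quantum clique via the dilation machinery of Section~\ref{sec:dilation}. Set $\alpha^n_{ij}=\langle A_nh_j,h_i\rangle$, so that $\alpha^n_{nn}=1$, $\alpha^n_{ij}=0$ whenever $\max\{i,j\}>n$, and each $A_n$ is the finite rank operator $\sum_{i,j\le n}\alpha^n_{ij}h_ih_j^*$. I will construct vectors $(x_n)$ in $\ell^2(\bN)$ with $\|x_n\|\le 2^{-(n+1)}$ such that, writing $T_n=\sum_{i,j}\alpha^n_{ij}x_ix_j^*$, the span $\spn\{T_n:n\ge 1\}$ is weakly dense in $\B(\ell^2(\bN))$. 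Granting this, Lemma~\ref{lem:isometry} yields an isometry $V:\ell^2(\bN)\to H\oplus H$ with $V^*(A_n\oplus 0_H)V=T_n$ for every $n$, so $V^*(\V\oplus 0_H)V\supseteq\spn\{T_n:n\ge 1\}$ is weakly dense in $\B(\ell^2(\bN))$, and Lemma~\ref{lem:dilation} then gives $\V$ an infinite quantum clique.

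For the construction I would first replace each $A_n$ by $A_n/\|A_n\|$; this only rescales each $T_n$ by a positive scalar, so it does not affect weak density of $\spn\{T_n\}$, and it arranges $|\alpha^n_{ij}|\le 1$ with $\alpha^n_{nn}=\|A_n\|^{-1}>0$. Fix a sequence $(\hat x_n)$ of unit vectors that is dense in the unit sphere of $\ell^2(\bN)$ and in which each term is repeated infinitely often, and write $x_n=\lambda_n\hat x_n$ with $0<\lambda_n\le 2^{-(n+1)}$ to be chosen, decreasing as rapidly as needed. The aim is to force the weak closure $\W$ of $\spn\{T_n:n\ge1\}$ to contain $\hat x_n\hat x_n^*$ for every $n$. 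This suffices: since $(\hat x_n)$ is dense, $\W$ then contains $uu^*$ for every unit vector $u$ (as a norm limit of the $\hat x_{n_k}\hat x_{n_k}^*$), hence every rank one operator by polarization, hence every finite rank operator, so $\W=\B(\ell^2(\bN))$.

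To extract $\hat x_n\hat x_n^*$ from $T_n$ I would peel off the lower order terms. Setting $p_n=\sum_{j<n}\overline{\alpha^n_{nj}}\lambda_j\hat x_j$ and using that $A_n$ is self-adjoint, one has
\[
T_n=\sum_{i,j<n}\alpha^n_{ij}\lambda_i\lambda_j\,\hat x_i\hat x_j^*\;+\;\lambda_n\big(\hat x_np_n^*+p_n\hat x_n^*\big)\;+\;\alpha^n_{nn}\lambda_n^2\,\hat x_n\hat x_n^*,
\]
where the first sum lies in the weakly closed span of $\{\hat x_i\hat x_j^*:i,j<n\}$ and, crucially, $\|p_n\|\le\sum_{j<n}\lambda_j$ is uniformly bounded thanks to the normalization. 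One chooses the $\lambda_n$ to decay fast enough, subtracts the ``old'' sum (available in $\W$ by induction), divides by $\lambda_n$, and passes to a weak limit along a subsequence on which $\hat x_n$ is constant — possible because of the repetitions in $(\hat x_n)$. In that limit the $\hat x_n\hat x_n^*$ term vanishes while the cross term survives, putting operators $\hat w q^*+q\hat w^*$ into $\W$; feeding these back shows the cross term $\lambda_n(\hat x_np_n^*+p_n\hat x_n^*)$ itself lies in $\W$, and what remains of $T_n$ is then a nonzero multiple of $\hat x_n\hat x_n^*$.

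The hard part is exactly this last step, and the obstacle is a dimension mismatch: each $A_n$ supplies only one operator $T_n$, while the corner $P_{[n]}\B(\ell^2(\bN))P_{[n]}$ it touches grows quadratically, so the off-diagonal products $\hat x_i\hat x_j^*$ cannot be read off from the $T_n$ directly. What makes it work is the interplay of four things: $\W$ is merely a linear subspace, so once the minuscule leading term $\lambda_n^2\hat x_n\hat x_n^*$ is isolated it may be rescaled up to $\hat x_n\hat x_n^*$; the normalization $\|A_n\|=1$ keeps the contamination vectors $p_n$ bounded so weak limits can be taken; the rapid decay of $\lambda_n$ makes the ``old'' sum and cross term genuinely negligible relative to what has already been placed in $\W$; and the repetitions in $(\hat x_n)$ furnish the subsequences along which those weak limits exist. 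Organizing the induction and the choice of the $\lambda_n$ so that every ``old'' term is actually present in $\W$ when it is needed is the technical heart of the argument, and is where the analogy with Weaver's finite dimensional Lemma~4.3 comes in.
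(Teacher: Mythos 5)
Your high-level plan matches the paper exactly: build $T_n=\sum_{i,j}\alpha^n_{ij}x_ix_j^*$ with $\|x_n\|\le 2^{-(n+1)}$, get $V$ from Lemma~\ref{lem:isometry}, and invoke Lemma~\ref{lem:dilation}. The divergence — and the gap — is in how you try to make $\spn\{T_n\}$ weakly dense.

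Your scheme aims to place each $\hat x_n\hat x_n^*$ into the weak closure $\W$ by subtracting off, at stage $n$, the ``old'' sum $\sum_{i,j<n}\alpha^n_{ij}\lambda_i\lambda_j\hat x_i\hat x_j^*$. But this sum contains the off-diagonal terms $\hat x_i\hat x_j^*$ with $i\ne j$, $i,j<n$, and those are precisely what the induction has \emph{not} produced: at stage $n$ you only control the finite set $\{\hat x_i\hat x_i^*:i<n\}$, and from a finite set of rank-one positives you cannot recover off-diagonal $\hat x_i\hat x_j^*$ by density or polarization — density only kicks in once you have all $\hat x_n\hat x_n^*$, which is the conclusion, not something available mid-induction. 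So the ``old sum is available in $\W$ by induction'' step is circular. You flag this yourself as the ``technical heart'' of the argument, but the proposal does not supply it, and I do not see a way to repair it within your setup: the cross terms $\lambda_n(\hat x_np_n^*+p_n\hat x_n^*)$ and the lower-order sum both scale like $\lambda_i\lambda_j\gg\lambda_n^2$, so the leading term $\lambda_n^2\hat x_n\hat x_n^*$ is swamped no matter how you choose the $\lambda_n$, and the weak-limit trick along the repetition subsequence only produces operators of the form $\hat w q^*+q\hat w^*$, not the diagonal products you actually need.

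The paper resolves the dimension mismatch you correctly identify (one new $T_n$ per step versus a quadratically growing corner) by making it into a dimension \emph{match}: it constrains $x_n\in\spn\{e_1,\dots,e_m\}$ whenever $(m-1)^2<n\le m^2$, so that $T_1,\dots,T_{m^2}$ all live inside $M_m(\bC)$, and then arranges at each step, via the perturbation identity
\[
T_k = T + yy^* + \alpha\,yz^* + \alpha\,zy^* + \alpha^2 zz^*\qquad(x_k=\alpha z),
\]
that $T_1,\dots,T_k$ stay linearly independent (only finitely many bad $\alpha>0$, since $zz^*\notin\spn\{T_1,\dots,T_{k-1}\}$). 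Linear independence plus the support constraint forces $\spn\{T_1,\dots,T_{m^2}\}=M_m(\bC)$ for every $m$, so $\spn\{T_n\}$ contains every finite corner and is trivially weakly dense. That counting argument — $m^2$ independent operators inside an $m^2$-dimensional corner — is the idea your proposal is missing, and it sidesteps entirely the need to recover off-diagonal products one at a time.
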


\begin{proof}
Let $(e_n)$ be the standard orthonormal basis for $\ell^2(\bN)$. For $m \geq 1$ we can identify the $m \times m$ matrices $M_m(\bC)$ with the set of operators $T \in \B(\ell^2(\bN))$ satisfying $\langle T e_i, e_j \rangle = 0$ if $\max\{i,j\} > m$. For $n \geq 1$ and $i,j \geq 1$, let $\alpha_{ij}^n = \langle A_n h_j, h_i \rangle$, so that
\[
A_n = \sum_{1 \leq i,j \leq n} \alpha_{ij}^n h_i h_j^*,
\]
where the sum is taken in the weak topology. Note that since $A_n$ is self-adjoint, $\alpha_{ij}^n = \overline{\alpha_{ji}^n}$.

We will inductively construct sequences $(x_n)$ in $\ell^2(\bN)$ and $(T_n)$ in $\B(\ell^2(\bN))$. For $n \geq 1$, the operator $T_n$ will be defined by
\[
T_n = \sum_{1 \leq i,j \leq n} \alpha_{ij}^n x_i x_j^*.
\]
We will choose $x_n$ such that $\|x_n\| \leq 2^{-(n+1)}$ and $x_n \in \operatorname{span}\{e_i : 1 \leq i \leq m\}$ if $(m - 1)^2 < n \leq m^2$. In addition, we will require that  the operators $T_1,\ldots,T_n$ are linearly independent. This will imply that for $m \geq 1$, $\operatorname{span}\{T_1,\ldots,T_{m^2} \} = M_m(\bC)$.

Let $x_1 = 2^{-2} e_1$. For $k \geq 2$, suppose $x_1,\ldots,x_{k-1}$ have been chosen and let $m \geq 1$ satisfy $(m - 1)^2 < k \leq m^2$. Suppose for the moment that $x_k \in \ell^2(\bN)$ has been chosen. Let
\[
y = \sum_{1 \leq i \leq {k-1}} \alpha_{ik}^k x_i
\]
and let
\[
T = \sum_{1 \leq i,j \leq k-1} \alpha_{ij}^k x_i x_j^* - yy^*.
\]
Then since $\alpha_{kk}^k = 1$,
\begin{align*}
T_k &= \sum_{1 \leq i,j \leq k-1} \alpha_{ij}^k x_i x_j^* - yy^* + (y + x_k)(y + x_k)^* \\
&= T + yy^* + yx_k^* + x_k y^* + x_k x_k^*.
\end{align*}

Since $M_m(\bC)$ is spanned by positive rank one matrices and $k \leq m^2$, there is $z \in \operatorname{span}\{e_i : 1 \leq i \leq m\}$ such that $zz^* \notin \operatorname{span}\{T_1,\ldots,T_{k-1} \}$. It is easy to check there are at most finitely many $\alpha > 0$ such that
\[
T + yy^* + \alpha yz^* + \alpha zy^* + \alpha^2 z z^* \in \operatorname{span}\{T_1,\ldots,T_{k-1}\}.
\]
Hence there is a choice of $\alpha > 0$ such that for $x_k = \alpha z$, the operators $T_1,\ldots,T_k$ are linearly independent and $\|x_k\| \leq 2^{-(k+1)}$. Proceeding inductively, we obtain the sequences $(T_n)$ and $(x_n)$ in $\ell^2(\bN)$ as required.

We now apply Lemma \ref{lem:isometry} to obtain an isometry $V : \ell^2(\bN) \to H \oplus H$ such that $V^* (A_n \oplus 0_H) V = T_n$ for $n \geq 1$. By construction, $\operatorname{span}\{T_n : n \geq 1\}$ is weakly dense in $\B(\ell^2(\bN))$. Hence by Lemma \ref{lem:dilation}, we conclude that $\V$ has an infinite quantum clique.
\end{proof}

\begin{lem} \label{lem:no-fin-rank}
Let $\V \subseteq \B(H)$ be an operator system on a countably infinite dimensional Hilbert space. Let $(A_n)$ be a linearly independent sequence of operators in $\V$ and let $(h_n)$ be an orthonormal basis for $H$ such that for $n \geq 1$ there is $N_n \geq 1$ such that $\langle A_n h_i, h_j \rangle = 0$ if $\max\{i,j\} > N_n$ and $i \ne j$. Suppose there are no nonzero finite rank operators in $\operatorname{span}\{A_n : n \geq 1\}$. Then there is an infinite rank projection $P \in \B(H)$ and an orthonormal basis $(f_n)$ for $PH$ such that the sequence of compressions $(P A_n P)$ is linearly independent and for $n \geq 1$, $\langle A_n f_i, f_j \rangle = 0$ if $\max\{i^3,j^3\} > n$ and $i \ne j$.
\end{lem}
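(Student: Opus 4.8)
The plan is to take each $f_j$ to be a single basis vector $h_{k_j}$, where $(k_j)$ is a strictly increasing sequence in $\bN$ still to be chosen, and to reduce the whole statement to a property of the \emph{diagonal parts} of the $A_n$. Write $A_n = \Delta_n + O_n$ in the basis $(h_k)$, where $\Delta_n = \sum_k d_n(k)\,h_k h_k^*$ with $d_n(k) = \langle A_n h_k, h_k\rangle$, and $O_n$ is the off-diagonal part. If $P_N$ denotes the projection onto $\spn\{h_1,\dots,h_N\}$, the hypothesis gives $O_n = P_{N_n} O_n P_{N_n}$; in particular each $O_n$ has finite rank, and $\langle A_n h_a, h_b\rangle = 0$ whenever $a \ne b$ and $\max\{a,b\} > N_n$. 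With $f_j = h_{k_j}$, the diagonal entries of the compression are simply $\langle (P A_n P) h_{k_j}, h_{k_j}\rangle = \langle A_n h_{k_j}, h_{k_j}\rangle = d_n(k_j)$, which is what will detect linear independence.

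First I would prove: if $(c_n)$ is a finitely supported nonzero scalar sequence, then $\sum_n c_n d_n \in \bC^{\bN}$ has infinite support, and in particular it is nonzero at some coordinate larger than any prescribed integer. Indeed, put $A = \sum_n c_n A_n$ and $N = \max\{N_n : c_n \ne 0\}$. Then $\sum_n c_n O_n \in P_N \B(H) P_N$ has finite rank, so if $\sum_n c_n d_n$ had finite support then $\sum_n c_n \Delta_n$ would also have finite rank, whence $A = \sum_n c_n \Delta_n + \sum_n c_n O_n$ would be a finite rank operator --- nonzero, by linear independence of $(A_n)$ --- contradicting the hypothesis that $\spn\{A_n : n \ge 1\}$ contains no nonzero finite rank operator. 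This is the only place the "no finite rank" hypothesis is used.

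Next I would construct $(k_j)$ one index at a time, maintaining two invariants: (i) $k_j > \max\{k_{j-1}, M_j\}$, where $M_j := \max\{N_n : 1 \le n < j^3\}$ (and $M_1 := 0$, $k_0 := 0$); and (ii) the $j \times j$ matrix $[\,d_n(k_p)\,]_{1\le n,p\le j}$ is invertible. Given $k_1,\dots,k_{j-1}$, expand $\det[\,d_n(k_p)\,]_{1\le n,p\le j}$ along its last column: it equals $g(k_j)$, where $g := \sum_{r=1}^{j} (-1)^{r+j}\mu_r\, d_r$ and $\mu_r$ is the $(r,j)$-minor of that matrix; since $\mu_j = \det[\,d_n(k_p)\,]_{1\le n,p\le j-1} \ne 0$ by invariant (ii) at the previous step (the empty determinant when $j=1$), $g$ is a nontrivial linear combination of $d_1,\dots,d_j$, so by the lemma $g$ is nonzero at some coordinate exceeding $\max\{k_{j-1}, M_j\}$; take $k_j$ to be such a coordinate. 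Both invariants are then preserved. Having built $(k_j)$, let $P$ be the projection onto $\overline{\spn}\{h_{k_j} : j \ge 1\}$ and $f_j = h_{k_j}$, an orthonormal basis for the infinite-dimensional space $PH$. For the off-diagonal condition: if $i \ne j$, set $\ell = \max\{i,j\}$; if $\max\{i^3,j^3\} = \ell^3 > n$ then $n < \ell^3$, so $N_n \le M_\ell < k_\ell = \max\{k_i, k_j\}$, and since $k_i \ne k_j$ the hypothesis on $(A_n)$ gives $\langle A_n f_i, f_j\rangle = \langle A_n h_{k_i}, h_{k_j}\rangle = 0$. For linear independence: any relation among $(P A_n P)_{n \ge 1}$ has the form $\sum_{n \le m} c_n P A_n P = 0$ for some $m$; evaluating at $h_{k_p}$ gives $\sum_{n \le m} c_n d_n(k_p) = 0$ for $p = 1,\dots,m$, and invertibility of $[\,d_n(k_p)\,]_{n,p\le m}$ forces $c = 0$.

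The main obstacle is the diagonal lemma of the second paragraph; once it is available, the remaining work is the bookkeeping of the single induction, arranging each $k_j$ so as to simultaneously keep $(k_j)$ strictly increasing, respect the off-diagonal budget $k_j > M_j$, and keep the leading principal submatrices $[\,d_n(k_p)\,]_{n,p\le j}$ nondegenerate --- all of which is routine, the key point being that a nontrivial combination of finitely many $d_n$'s, having infinite support, can be made nonzero beyond any prescribed coordinate.
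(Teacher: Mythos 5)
Your proof is correct, and it follows essentially the same line as the paper's: both isolate the key consequence of the no--finite--rank hypothesis (that any nontrivial finite combination of the $A_n$ is eventually diagonal with infinitely many nonzero diagonal entries), and both inductively choose indices $k_j$ large enough to meet the off-diagonal budget $k_j > N_n$ for $n < j^3$ while certifying linear independence of the compressions. Where the paper handles the linear-independence bookkeeping by a case distinction (either no $A$ in $\spn\{A_1,\dots,A_{k-1}\}$ matches the new compression, or a unique one does, in which case $A - A_k$ is an infinite-rank combination whose far-out diagonal distinguishes them), you instead maintain the invariant that the matrix $[d_n(k_p)]_{n,p\le j}$ is invertible and use cofactor expansion along the last column to reduce the inductive step to the same infinite-support fact applied to a single combination $g$ with nonvanishing leading cofactor $\mu_j$. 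This determinant invariant is a tidier packaging of the same idea and avoids the paper's case split, but the underlying argument is the same.
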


\begin{proof}
We can assume that the sequence $(N_n)$ is strictly increasing. We inductively construct a sequence $(f_n)$ in $H$ and a sequence $(M_n)$ of indices such that for $n \geq 1$, $f_n = h_{M_n}$, $M_n > N_{n^3}$ and the compressions $P_n A_1 P_n,\ldots,P_n A_n P_n$ are linearly independent, where $P_n$ denotes the projection onto $\operatorname{span}\{f_1,\ldots,f_n\}$.

Since $A_1$ has infinite rank there is $M_1 > N_1$ such that $\langle A_1 h_{M_1}, h_{M_1} \rangle \ne 0$. Let $f_1 = h_{M_1}$. For $k \geq 2$ suppose $f_1,\ldots,f_{k-1}$ and $M_1,\ldots,M_{k-1}$ have been chosen.

Suppose first there is an operator $A \in \operatorname{span}\{A_1,\ldots,A_{k-1}\}$ such that $P_{k-1} A P_{k-1} = P_{k-1} A_k P_{k-1}$. Since $P_{k-1} A_1 P_{k-1},\ldots,P_{k-1} A_{k-1} P_{k-1}$ are linearly independent by the induction hypothesis, $A$ is unique. Also, since $A_1,\ldots,A_k$ are linearly independent, $A - A_k$ has infinite rank, so there is $M_k > N_{k^3}$ such that $\langle (A - A_k) h_{M_k}, h_{M_k} \rangle \ne 0$. Then $\langle A h_{M_k}, h_{M_k} \rangle \ne \langle A_k h_{M_k}, h_{M_k} \rangle$. Otherwise, if there is no such operator $A \in \operatorname{span}\{A_1,\ldots,A_{k-1}\}$, then choose any $M_k > N_{k^3}$. Either way, letting $f_k = h_{M_k}$ and letting $P_k$ denote the projection onto $\operatorname{span}\{f_1,\ldots,f_k\}$, it follows that $P_k A_1 P_k, \ldots, P_k A_k P_k$ are linearly independent. Proceeding inductively, we obtain the sequence $(h_n)$. 

For $n \geq 1$ and $i,j \geq 1$ with $\max\{i^3,j^3\} > n$ and $i \ne j$, it follows from above that either $M_i > N_{i^3} > N_n$ or $M_j > N_{j^3} > N_n$. Hence
\[
\langle A_n f_i, f_j \rangle = \langle A_n h_{M_i}, h_{M_j} \rangle = 0.
\]
We can take $P$ to be the projection onto the closure of $\operatorname{span}\{f_n : n \geq 1\}$.
\end{proof}

\begin{lem} \label{lem:diag}
Let $\V \subseteq \B(H)$ be an operator system on a countably infinite dimensional Hilbert space. Let $(A_n)$ be a linearly independent sequence of operators in $\V$ and let $(h_n)$ be an orthonormal basis for $H$ such that for $n \geq 1$, $\langle A_n h_i, h_j \rangle = 0$ if $\max\{i^3,j^3\} > n$ and $i \ne j$. Then there is a sequence $(B_n)$ in $\V$ that is linearly independent and simultaneously diagonalizable.
\end{lem}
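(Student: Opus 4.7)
The plan is a pure dimension count, exploiting the gap between the number of operators $A_1, \ldots, A_N$ and the size of the space in which their off-diagonal parts can live. By the hypothesis, the off-diagonal entries of $A_n$ vanish outside the upper-left $\lfloor n^{1/3} \rfloor \times \lfloor n^{1/3} \rfloor$ block, so for every $n \leq m^3$ the off-diagonal part of $A_n$ is supported inside the $m \times m$ upper-left block, whose off-diagonal positions form a complex vector space of dimension $m^2 - m$.

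Let $E : \B(H) \to \B(H)$ denote the conditional expectation onto the diagonal masa associated with $(h_n)$, namely $E(T) = \sum_i \langle T h_i, h_i \rangle h_i h_i^*$, which is the completely positive unital projection of norm one onto the diagonal operators. Set $W_N = \operatorname{span}\{A_1, \ldots, A_N\}$, which has dimension $N$ by linear independence. The map $T \mapsto T - E(T)$ sends $W_{m^3}$ into the $(m^2 - m)$-dimensional space of operators off-diagonally supported in the $m$-block, so rank-nullity gives
\[
\dim K_m \geq m^3 - (m^2 - m) = m^3 - m^2 + m,
\]
where $K_m := \{T \in W_{m^3} : T = E(T)\}$ consists of the diagonal operators in $W_{m^3} \subseteq \V$. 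Since $W_{m^3} \subseteq W_{(m+1)^3}$, one has $K_m \subseteq K_{m+1}$, so $K := \bigcup_m K_m$ is an infinite-dimensional subspace of $\V$ all of whose elements are diagonal in the basis $(h_n)$. Choosing any linearly independent sequence $(B_n)$ from $K$ yields the desired sequence, diagonalized simultaneously by $(h_n)$.

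There is no substantive obstacle in this argument; it is essentially forced by the dimensional asymmetry $m^3 \gg m^2 - m$ that the exponent $3$ in the hypothesis of Lemma \ref{lem:no-fin-rank} was chosen to provide. The only routine verifications are that $E$ is well-defined on each $A_n$ (immediate, since $E$ is bounded of norm one on all of $\B(H)$) and that $T \mapsto T - E(T)$ restricted to $W_{m^3}$ really lands in the claimed $(m^2 - m)$-dimensional off-diagonal target, which is immediate from the support condition on each $A_n$ with $n \leq m^3$.
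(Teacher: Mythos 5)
Your proof is correct and is essentially the same dimension-counting argument as the paper's: both exploit the fact that the strictly off-diagonal parts of $A_1,\ldots,A_{m^3}$ all live in a space of dimension at most $m^2-m$, so the pigeonhole forces many diagonal linear combinations. The only difference is organizational: the paper partitions the indices into disjoint blocks $\{(n-1)^3+1,\ldots,n^3\}$ and extracts one nonzero diagonal operator $B_{n-1}$ from each block (using $3n^2-3n+1 > n^2$), then argues linear independence of $(B_n)$ from the disjointness of the blocks and the linear independence of $(A_n)$; you instead track the nested kernels $K_m = \{T \in W_{m^3} : T = E(T)\}$ and observe that $\dim K_m \geq m^3 - m^2 + m \to \infty$, so linear independence of the resulting $(B_n)$ comes for free from the infinite-dimensionality of $K = \bigcup_m K_m$. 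Your phrasing via the diagonal conditional expectation $E$ is a bit more formal but identical in content.
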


\begin{proof}
For $n \geq 1$ we can identify the $n \times n$ matrices $M_n(\bC)$ with the set of operators $T \in \B(H)$ satisfying $\langle T h_i, h_j \rangle = 0$ if $\max\{i,j\} > n$. For $n \geq 2$, the $3n^2-3n+1$ operators $A_{(n-1)^3+1},\ldots,A_{n^3}$ can each be written as the sum of an operator in $M_n(\bC)$ and an operator in $\B(H)$ that is diagonal with respect to the orthonormal basis $(h_n)$. Since $3n^2 - 3n + 1 \geq n^2 = \dim M_n(\bC)$, there is a nonzero operator $B_{n-1} \in \operatorname{span}\{A_{(n-1)^3+1},\ldots,A_{n^3}\}$ that is diagonal with respect to $(h_n)$. The linear independence of the sequence $(A_n)$ implies the linear independence of the sequence $(B_n)$.
\end{proof}

\begin{prop} \label{prop:pre-diagonalizable}
Let $\V \subseteq \B(H)$ be an operator system on an infinite dimensional Hilbert space. Suppose there is no infinite rank projection $P \in \B(H)$ such that the compression $P \V P$ is diagonalizable. Then either $\V$ has an infinite quantum clique or there is an infinite rank projection $P \in \B(H)$ and an operator system $\V' \subseteq \V$ such that the compression $P \V' P$ is infinite dimensional and diagonalizable.
\end{prop}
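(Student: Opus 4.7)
The plan is to chain the lemmas of this section, branching at Lemma~\ref{lem:exists-nonzero-fin-rank} according to whether the tails of a suitable sequence in $\V$ contain nonzero finite rank operators. Since no compression of $\V$ is diagonalizable, Lemma~\ref{lem:infinite-codim-or-diagonalizable} supplies an infinite rank projection $P_0$ such that $\dim P_0 H \ominus P_0 \V P_0 x < \infty$, and hence $\dim P_0 \V P_0 x = \infty$, for every nonzero $x \in P_0 H$. Working inside $P_0 \V P_0$ on $P_0 H$, I would apply Lemma~\ref{lem:inf-dim-range} to obtain a sequence $(A_n) \subseteq P_0 \V P_0$ and an orthonormal sequence $(x_n) \subseteq P_0 H$ with $\langle A_n x_n, x_{n+1}\rangle = 1$ and $\langle A_n x_i, x_j\rangle = 0$ whenever $\max\{i,j\} > n+1$ and $i \neq j$. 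The triangular pattern $\langle A_n x_n, x_{n+1}\rangle = 1$ immediately forces $(A_n)$ to be linearly independent. Letting $P_1 \leq P_0$ be the projection onto $\overline{\operatorname{span}\{x_n\}}$ and replacing each $A_n$ by $P_1 A_n P_1$, the sequence $(x_n)$ becomes an orthonormal basis of $P_1 H$ and the hypotheses of Lemma~\ref{lem:exists-nonzero-fin-rank} are in force for $P_1 \V P_1$.

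In the first case, every tail $\operatorname{span}\{P_1 A_n P_1 : n \geq N\}$ contains a nonzero finite rank operator. Then Lemma~\ref{lem:exists-nonzero-fin-rank} produces a self-adjoint sequence $(B_n) \subseteq P_1 \V P_1$ and an orthonormal sequence $(f_n) \subseteq P_1 H$ satisfying $\langle B_n f_n, f_n\rangle = 1$ and $\langle B_n f_i, f_j\rangle = 0$ for $\max\{i,j\} > n$. Passing to the projection $Q \leq P_1$ onto $\overline{\operatorname{span}\{f_n\}}$ turns $(f_n)$ into an orthonormal basis of $QH$, and the compressions $(QB_nQ) \subseteq Q\V Q$ satisfy the hypothesis of Lemma~\ref{lem:fin-supp}. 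That lemma then delivers an infinite quantum clique for $Q\V Q$, which, since $Q \leq P_1$, is simultaneously an infinite quantum clique for $\V$.

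In the complementary case, some tail of $(P_1 A_n P_1)$ contains no nonzero finite rank operator; after reindexing I may assume this of the whole sequence. Lemma~\ref{lem:no-fin-rank} then provides an infinite rank projection $P' \leq P_1$ and an orthonormal basis $(f_n)$ of $P'H$ such that $(P' A_n P')$ is linearly independent and $\langle A_n f_i, f_j\rangle = 0$ whenever $\max\{i^3, j^3\} > n$ and $i \neq j$, exactly what Lemma~\ref{lem:diag} requires. Lemma~\ref{lem:diag} then yields a linearly independent sequence $(B_n) \subseteq \operatorname{span}\{P' A_n P' : n \geq 1\}$ that is simultaneously diagonal in the basis $(f_n)$. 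Writing each $B_n$ as a finite linear combination of compressions $P' A_k P'$ and lifting each $A_k \in P_1 \V P_1$ to an element of the original $\V$, I obtain $C_n \in \V$ with $P' C_n P' = B_n$. Setting $\V' = \operatorname{span}\{I_H, C_n, C_n^* : n \geq 1\}$ gives an operator system $\V' \subseteq \V$ with $P' \V' P' = \operatorname{span}\{P', B_n, B_n^* : n \geq 1\}$, which is infinite dimensional and diagonal with respect to $(f_n)$.

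The main obstacle is the bookkeeping across the nested restrictions $P_0 \geq P_1 \geq Q$ (resp.\ $P_0 \geq P_1 \geq P'$): one must verify at each step that the hypotheses of the next lemma really are met, particularly the linear independence of the compressed sequences, and, in the diagonalizable case, that the subsystem $\V'$ obtained at the end truly sits inside the \emph{original} $\V$ rather than inside one of its compressions. The latter point is handled by the explicit lifting of the $B_n$'s to elements $C_n \in \V$ in the final step, which is legitimate because compression is linear and $P' \leq P_1 \leq P_0$.
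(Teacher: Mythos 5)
Your proposal is correct and follows the same route as the paper: Lemma~\ref{lem:infinite-codim-or-diagonalizable} to get the initial compression, Lemma~\ref{lem:inf-dim-range} to produce the sequence $(A_n)$ and orthonormal $(x_n)$, then the case split on whether tails of the span contain nonzero finite rank operators, feeding Lemmas~\ref{lem:exists-nonzero-fin-rank} and~\ref{lem:fin-supp} in one branch and Lemmas~\ref{lem:no-fin-rank} and~\ref{lem:diag} in the other. Your write-up is actually somewhat more careful than the paper's in two places the paper leaves implicit: you make explicit the intermediate compression $P_1$ to the closed span of $(x_n)$ (needed since $(x_n)$ is only an orthonormal sequence, while Lemmas~\ref{lem:exists-nonzero-fin-rank} and~\ref{lem:no-fin-rank} require an orthonormal basis), you verify linear independence of $(A_n)$ from the triangular structure (needed to invoke Lemma~\ref{lem:no-fin-rank}), and in the final step you take $\V' = \spn\{I_H, C_n, C_n^*\}$, which is genuinely unital and self-adjoint — the paper's $\V' = \spn\{C_n\}$ as written is not an operator system, though the intent is clear.
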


\begin{proof}
Suppose there is no infinite rank projection $P \in \B(H)$ such that the compression $P \V P$ is diagonalizable. Then by Lemma \ref{lem:infinite-codim-or-diagonalizable} there is an infinite rank projection $P \in \B(H)$ such that for every nonzero vector $x \in PH$, $\dim PH \ominus P \V P x < \infty$. By replacing $\V$ with $P \V P$, we can assume that for every nonzero vector $x \in H$, $\dim H \ominus \V x < \infty$. In particular, $\dim \V x = \infty$.

By Lemma \ref{lem:inf-dim-range}, there is a sequence $(A_n)$ in $\V$ and an orthonormal sequence $(x_n)$ in $H$ such that for $n \geq 1$, $\langle A_n x_n, x_{n+1} \rangle = 1$ and $\langle A_n x_i, x_j \rangle = 0$ if $\max\{i,j\} > n+1$ and $i \ne j$.

Suppose first that for all $N \geq 1$ there is a nonzero finite rank operator in $\operatorname{span}\{A_n : n \geq N\}$. Then applying Lemma \ref{lem:exists-nonzero-fin-rank} followed by Lemma \ref{lem:fin-supp}, we conclude that $\V$ has an infinite quantum clique.

Otherwise, by truncating $(A_n)$, we can assume that $\operatorname{span}\{A_n : n \geq 1\}$ does not contain any nonzero finite rank operators. Then we can apply Lemma \ref{lem:no-fin-rank}, followed by Lemma \ref{lem:diag} to obtain an infinite rank projection $P \in \B(H)$ and a sequence $(C_n)$ in $\V$ such that the sequence $(P C_n P)$ is linearly independent and diagonalizable. In this case we can take $\V' = \operatorname{span}\{C_n : n \geq 1\}$.
\end{proof}

\section{Diagonalizable case} \label{sec:diagonalizable}

\begin{lem} \label{lem:compression-compact}
Let $H$ be a countably infinite dimensional Hilbert space and let $A_1,\ldots,A_n \in \B(H)$ be self-adjoint and simultaneously diagonalizable. Then there is an infinite rank projection $P \in \B(H)$ and scalars $\alpha_1,\ldots,\alpha_n \in \bR$ such that for $k \geq 1$ the compression $P(A_k - \alpha_k I_H)P$ is self-adjoint and compact.
\end{lem}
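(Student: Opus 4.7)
Since the operators $A_1,\ldots,A_n$ are self-adjoint and simultaneously diagonalizable, there is an orthonormal basis $(h_m)_{m \geq 1}$ of $H$ consisting of common eigenvectors, and there exist real eigenvalues $\lambda_{k,m}$ with $A_k h_m = \lambda_{k,m} h_m$ for $1 \leq k \leq n$ and $m \geq 1$. The plan is to extract an infinite subsequence of basis vectors along which all $n$ eigenvalue sequences converge simultaneously.

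Each sequence $(\lambda_{k,m})_{m \geq 1}$ is bounded in $\bR$, since $|\lambda_{k,m}| \leq \|A_k\|$. Therefore the sequence of vectors $\bigl(\lambda_{1,m},\ldots,\lambda_{n,m}\bigr)_{m \geq 1}$ is bounded in $\bR^n$, and by the Bolzano--Weierstrass theorem it admits a convergent subsequence indexed by some strictly increasing sequence $(m_j)_{j \geq 1}$, with limit $(\alpha_1,\ldots,\alpha_n) \in \bR^n$. Let $P$ be the orthogonal projection onto the closed subspace $\overline{\spn}\{h_{m_j} : j \geq 1\}$, so that $P$ has infinite rank.

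For each $k$, the operator $A_k - \alpha_k I_H$ is self-adjoint and diagonal with respect to the basis $(h_m)$, sending $h_m$ to $(\lambda_{k,m} - \alpha_k) h_m$. Since $P$ commutes with every diagonal operator in this basis, $P(A_k - \alpha_k I_H) P$ coincides, as an operator on $PH$, with the diagonal operator with eigenvalues $\lambda_{k,m_j} - \alpha_k$ relative to the orthonormal basis $(h_{m_j})$ of $PH$. This diagonal operator is self-adjoint, and since $\lambda_{k,m_j} - \alpha_k \to 0$ as $j \to \infty$, it is a norm limit of finite rank operators, hence compact.

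There is no real obstacle here; the only content is the simple observation that compactness of a diagonal operator is equivalent to its eigenvalues tending to zero, combined with a finite iteration (or a single application in $\bR^n$) of Bolzano--Weierstrass to handle all $n$ sequences at once.
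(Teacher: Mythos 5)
Your proof is correct and uses essentially the same argument as the paper: diagonalize, extract a convergent subsequence of eigenvalues, and project onto the span of the corresponding basis vectors. The only cosmetic difference is that you apply Bolzano--Weierstrass once in $\bR^n$ to handle all the operators simultaneously, whereas the paper treats a single operator and then iterates; these are interchangeable.
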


\begin{proof}
First consider the case of a single self-adjoint diagonalizable operator $A \in \B(H)$. Let $(h_n)$ be an orthonormal basis for $H$ that diagonalizes $A$. For $n \geq 1$, let $\alpha_n = \langle A h_n, h_n \rangle$. Since $A$ is bounded, the sequence $(\alpha_n)$ is bounded. Hence there is a subsequence $(\alpha_{n_k})$ converging to $\alpha \in \bR$. Let $P$ denote the projection onto the closure of $\operatorname{span}\{h_{n_k} : k \geq 1\}$. Then the compression $P(A - \alpha I_H)P$ is a diagonalizable operator with diagonal entries converging to zero. In particular it is compact. The case of multiple operators follows from an iteration of the above argument.
\end{proof}

\begin{lem} \label{lem:diag-finite-dim-case}
Let $\V \subseteq \B(H)$ be a finite dimensional diagonalizable operator system on an infinite dimensional Hilbert space. Then either $\V$ has an infinite quantum anticlique or it has an infinite quantum obstruction.
\end{lem}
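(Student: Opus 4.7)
The plan is to first apply Lemma \ref{lem:compression-compact} to reduce to a situation in which every element of the compressed operator system is a scalar plus a compact operator, and then make a single case split to decide between an anticlique and an obstruction. Fix a basis $I_H, A_1, \ldots, A_n$ of $\V$ with each $A_k$ self-adjoint (possible since $\V$ is finite-dimensional and self-adjoint), and fix an orthonormal basis $(h_m)$ of $H$ that simultaneously diagonalizes the $A_k$. Applying Lemma \ref{lem:compression-compact} to $A_1, \ldots, A_n$ (whose proof selects a subsequence of the diagonalizing basis) produces real scalars $\alpha_1, \ldots, \alpha_n$ and an infinite rank projection $P$ onto the closed span of a subsequence $(h_{m_j})$ such that each $K_k := P(A_k - \alpha_k I_H) P$ is self-adjoint, compact, and diagonal with respect to $(h_{m_j})$. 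Consequently $P \V P = \operatorname{span}\{P, K_1, \ldots, K_n\}$ is finite-dimensional and each of its elements is a scalar multiple of $P$ plus a compact operator.

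Let $\mu_{k,j} = \langle K_k h_{m_j}, h_{m_j}\rangle$ and set $Z = \{\,j \geq 1 : \mu_{k,j} = 0 \text{ for every } k\,\}$. The argument splits according to whether $Z$ is infinite or finite. In the first case, take $Q$ to be the projection onto $\overline{\operatorname{span}}\{h_{m_j} : j \in Z\}$. Since each $K_k$ is diagonal and vanishes on the basis vectors indexed by $Z$, we get $Q K_k Q = 0$ for all $k$, and hence $Q \V Q = \bC Q$, exhibiting $Q$ as an infinite quantum anticlique.

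In the second case, take $Q$ to be the projection onto $\overline{\operatorname{span}}\{h_{m_j} : j \notin Z\}$, which still has infinite rank because $Z$ is finite. For each $j \notin Z$, the real linear functional $c \mapsto \sum_k c_k \mu_{k,j}$ on $\bR^n$ is nonzero, so its kernel is a proper hyperplane. Since $\bR^n$ is not the union of countably many proper subspaces, we may choose $c = (c_1, \ldots, c_n) \in \bR^n$ with $\sum_k c_k \mu_{k,j} \neq 0$ for every $j \notin Z$. Set $K = \sum_k c_k K_k \in P \V P$. Then $QKQ$ is a self-adjoint compact operator in $Q \V Q$, diagonal on the orthonormal basis $(h_{m_j})_{j \notin Z}$ of $QH$ with all diagonal entries nonzero; being compact self-adjoint with trivial kernel on $QH$, it has range dense in $QH$. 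Combined with the fact that $Q \V Q \subseteq P \V P$ is finite-dimensional and that every element of $Q \V Q$ is a scalar multiple of $Q$ plus a compact operator, this exhibits $Q$ as an infinite quantum obstruction.

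The only nontrivial technical point is the choice of real coefficients $c_k$ avoiding a countable family of proper hyperplanes in $\bR^n$, which is a routine Baire category (or measure-theoretic) argument. The bulk of the analytic content is already packaged inside Lemma \ref{lem:compression-compact}, and the clean invariant $Z$ cleanly separates the anticlique and obstruction conclusions, so the rest of the proof is bookkeeping.
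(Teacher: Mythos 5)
Your proof is correct, and the overall architecture (reduce to the compact diagonal situation via Lemma~\ref{lem:compression-compact}, then split into two cases) matches the paper. The case split itself is genuinely different, though. The paper, after compressing so that each $A_k$ is compact and diagonal, dichotomizes on whether every $A_k$ has finite rank (tail projection gives an anticlique) or some $A_k$ has infinite rank (the projection onto $\overline{\operatorname{ran}}A_k$ gives an obstruction directly, since that single operator already has dense range there). You instead dichotomize on whether the common zero set $Z$ of the diagonals is infinite or finite, and in the second case you must manufacture a single compact element of the compression with dense range by taking a generic real linear combination of the $K_k$, invoking the fact that $\bR^n$ is not a countable union of hyperplanes. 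The two dichotomies are related but not identical: all $A_k$ finite rank implies $Z$ cofinite, and $Z$ finite implies some $A_k$ has infinite rank, but in the overlap (some $A_k$ infinite rank yet $Z$ still infinite) the paper produces an obstruction while you produce an anticlique --- both of course acceptable, since the lemma asserts a disjunction. The paper's route avoids the hyperplane-avoidance step by using a single distinguished $A_k$; yours is marginally heavier but has the virtue that $Z$ is precisely the ``anticlique-detecting'' invariant. One small point worth flagging: you rely on the internal structure of the proof of Lemma~\ref{lem:compression-compact} --- that the projection $P$ it produces is onto a span of a subsequence of the original diagonalizing basis, so that the $K_k$ remain simultaneously diagonal. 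That is indeed how the lemma is proved, but it is not part of its statement, so the reliance should be made explicit (as you do parenthetically); alternatively, one could just re-diagonalize the compressed commuting self-adjoint family on $PH$ and avoid appealing to the lemma's proof at all.
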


\begin{proof}
Choose self-adjoint operators $A_1,\ldots,A_n \in \V$ such that $\V = \operatorname{span}\{I_H,A_1,\ldots,A_n\}$. By Lemma \ref{lem:compression-compact}, there is an infinite rank projection $P \in \B(H)$ and scalars $\alpha_1,\ldots,\alpha_n \in \bR$ such that for $k \geq 1$, $P(A_k - \alpha_k I_H)P$ is self-adjoint and compact. Hence by replacing each $A_k$ with $P(A_k - \alpha_k I_h)P$ and replacing $\V$ with the compression $P \V P$, we can assume that each $A_k$ is compact.

Let $(h_n)$ be an orthonormal basis for $H$ that simultaneously diagonalizes $A_1,\ldots,A_n$. If each $A_k$ has finite rank, then for sufficiently large $N$ the projection onto the closure of $\operatorname{span}\{e_n : n \geq N\}$ is an infinite quantum anticlique for $\V$. Otherwise, if some $A_k$ has infinite rank, then the projection onto the closure of the range of $A_k$ is an infinite quantum obstruction for $\V$.
\end{proof}

\begin{lem} \label{lem:diag-infinite-dim-case-1}
Let $\V \subseteq \B(H)$ be a simultaneously diagonalizable operator system on an infinite dimensional Hilbert space and let $(A_n)$ be a linearly independent sequence of operators in $\V$. Then there is a sequence $(B_n)$ of operators in $\V$, an infinite rank projection $P \in \B(H)$ and an orthonormal basis $(f_n)$ for $PH$ such that the sequence of compressions $(P B_n P)$ is linearly independent and for $n \geq 1$, $\langle B_n f_i, f_i \rangle = 0$ for $1 \leq i < n$ and $\langle B_n f_n, f_n \rangle = 1$.
\end{lem}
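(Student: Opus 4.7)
The plan is to exploit simultaneous diagonalizability and reduce the problem to a triangularization-by-careful-choice-of-basis-vectors argument. Fix an orthonormal basis $(h_\alpha)_{\alpha \in I}$ for $H$ with respect to which every operator in $\V$ is diagonal, so that each $A_n$ acts as a bounded diagonal operator with entries $\langle A_n h_\alpha, h_\alpha\rangle$. I will construct by induction on $n$ a sequence of distinct indices $k_1, k_2, \ldots \in I$ together with a sequence $(B_n)$ in $\V$ having the required vanishing and normalization pattern on the diagonal. Then I set $f_n := h_{k_n}$ and take $P$ to be the projection onto the closure of $\spn\{h_{k_n} : n \geq 1\}$, which is an infinite rank projection and has $(f_n)_{n \geq 1}$ as an orthonormal basis of $PH$.

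For the base case, $A_1$ is nonzero and diagonal, so $\langle A_1 h_{k_1}, h_{k_1}\rangle \ne 0$ for some $k_1 \in I$, and I normalize by setting $B_1 := A_1/\langle A_1 h_{k_1}, h_{k_1}\rangle$. For the inductive step, suppose $k_1, \ldots, k_{n-1}$ and $B_1, \ldots, B_{n-1}$ have been chosen satisfying the required conditions. Consider the linear map $\Psi \colon \spn\{A_1, \ldots, A_n\} \to \bC^{n-1}$ defined by $\Psi(A) = (\langle A h_{k_i}, h_{k_i}\rangle)_{i=1}^{n-1}$. Since $(A_1, \ldots, A_n)$ is linearly independent, the domain has dimension $n$, and rank-nullity forces $\dim \ker \Psi \geq 1$. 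Pick any nonzero $A \in \ker \Psi$. The crux of the argument is that $A$ is diagonal and nonzero, so some diagonal entry must be nonzero; since the diagonal of $A$ already vanishes at $k_1, \ldots, k_{n-1}$ by choice of $A$, there is necessarily a fresh index $k_n \in I \setminus \{k_1, \ldots, k_{n-1}\}$ with $\langle A h_{k_n}, h_{k_n}\rangle \ne 0$. Setting $B_n := A/\langle A h_{k_n}, h_{k_n}\rangle \in \V$ completes the induction.

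For the linear independence of $(PB_nP)$, note that since each $B_n$ is diagonal with respect to $(h_\alpha)$, the compression $PB_nP$ acts diagonally on $PH$ in the basis $(f_j)$ with diagonal sequence $(\langle B_n f_j, f_j\rangle)_{j \geq 1}$; by construction this sequence vanishes for $j < n$ and equals $1$ at $j = n$, so the array $M_{n,j} := \langle B_n f_j, f_j\rangle$ is upper triangular with ones on the diagonal. Any vanishing finite combination $\sum_{n=1}^N c_n P B_n P = 0$, evaluated successively at $f_1, f_2, \ldots, f_N$, then forces $c_1 = c_2 = \cdots = c_N = 0$. I do not expect any serious obstacle: the only nontrivial point is the interplay between the two hypotheses, namely that rank-nullity (using linear independence of $(A_n)$) always produces a nonzero combination vanishing on the previously chosen diagonal positions, while diagonalizability guarantees such a combination witnesses its nonzeroness at a fresh diagonal index, which is precisely what perpetuates the induction.
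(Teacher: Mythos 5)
Your proof is correct and follows essentially the same inductive strategy as the paper's: at each step use linear independence of $\{A_1,\ldots,A_n\}$ (via rank-nullity) to produce a nonzero combination whose diagonal vanishes at the previously chosen positions, then pick a fresh index where it is nonzero and normalize. You fill in a few details the paper leaves implicit — notably the observation that the vanishing condition forces $k_n$ to be new, and the explicit upper-triangular check for linear independence of $(PB_nP)$ — but the underlying argument is the same.
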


\begin{proof}
Let $(h_n)$ be an orthonormal basis for $H$ that simultaneously diagonalizes $(A_n)$. We inductively construct sequences $(B_k)$ and $(f_k)$ and a sequence $(n_k)$ in $\bN$ such that for $k \geq 1$, $B_k \in \operatorname{span}\{A_1,\ldots,A_k\}$, $f_k = h_{n_k}$, $\langle B_k f_i, f_i \rangle = 0$ for $1 \leq i < k$ and $\langle B_k f_k, f_k \rangle = 1$.

For $k = 1$ choose $n_1 \geq 1$ such that $\langle A_1 h_{n_1}, h_{n_1} \rangle \ne 0$. Then we can take $B_1 = A_1 / \langle A_1 h_{n_1}, h_{n_1} \rangle$ and $f_1 = h_{n_1}$. Suppose $f_1,\ldots,f_{k-1}$ and $B_1,\ldots,B_{k-1}$ have been chosen. By the induction hypothesis and the linear independence of the sequence $(A_n)$ there is nonzero $A \in \operatorname{span}\{A_1,\ldots,A_k\}$ such that $\langle A h_{n_i}, h_{n_i} \rangle = \langle A f_i, f_i \rangle = 0$ for $1 \leq i < k$. Choose $n_k \in \bN$ such that $\langle A h_{n_k}, h_{n_k} \rangle \ne 0$. Then we can take $B_k = A / \langle A e_{n_k}, e_{n_k} \rangle$ and $f_k = h_{n_k}$. Proceeding inductively, we obtain the sequences $(B_n)$ and $(f_n)$ as desired. We can take $P$ to be the projection onto the closure of $\operatorname{span}\{f_n : n \geq 1\}$.
\end{proof}

\begin{lem} \label{lem:diag-infinite-dim-case-2}
Let $V \subseteq \B(H)$ be an operator system on a countably infinite dimensional Hilbert space. Let $(A_n)$ be a linearly independent sequence of simultaneously diagonalizable operators in $\V$ and let $(h_n)$ be an orthonormal basis that simultaneously diagonalizes $(A_n)$. Suppose that for each $n$, $\langle A_n h_i, h_i \rangle = 0$ for $1 \leq i < n$ and $\langle A_n h_n, h_n \rangle = 1$. Then $\V$ has an infinite quantum clique.
\end{lem}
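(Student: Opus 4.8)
The plan is to follow the strategy of Lemma~\ref{lem:fin-supp}: using Lemma~\ref{lem:isometry}, produce an isometry $V : \ell^2(\bN) \to H \oplus H$ with $V^*(A_n \oplus 0_H)V = T_n$ for a cleverly chosen sequence $(x_n)$ in $\ell^2(\bN)$ with $\|x_n\| \le 2^{-(n+1)}$, then arrange that $\operatorname{span}\{T_n : n \ge 1\}$ is weakly dense in $\B(\ell^2(\bN))$, and conclude with Lemma~\ref{lem:dilation}. Writing $\lambda_i^n = \langle A_n h_i, h_i\rangle$, so that $\lambda_i^n = 0$ for $i < n$ and $\lambda_n^n = 1$, diagonality of $A_n$ gives $T_n = \sum_{i \ge n}\lambda_i^n x_i x_i^*$, the sum converging in operator norm since $\sum_i\|x_i\|^2 < \infty$ and $|\lambda_i^n| \le \|A_n\|$.

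The new feature, compared with Lemma~\ref{lem:fin-supp}, is that the $A_n$, while diagonal, need not be supported on finitely many basis vectors, so the $T_n$ will not have finite support and an exact identity $\operatorname{span}\{T_1,\dots,T_{m^2}\} = M_m(\bC)$ is not available. I would instead build the $x_n$ in blocks $B_k = \{n : (k-1)^2 < n \le k^2\}$, with $x_n \in \operatorname{span}\{e_1,\dots,e_k\}$ for $n \in B_k$, and set $T_n^{(k)} = \sum_{n \le i \le k^2}\lambda_i^n x_i x_i^* \in M_k(\bC)$, so that $\|T_n - T_n^{(k)}\| \le \|A_n\|\sum_{i > k^2}\|x_i\|^2$. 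Within one block, ordering $B_k = \{n_1 < \dots < n_{2k-1}\}$, the relations $\lambda_i^n = 0$ for $i < n$ and $\lambda_n^n = 1$ express $(T_{n_\ell}^{(k)})_\ell$ in terms of $(x_{n_p}x_{n_p}^*)_p$ by a unit upper-triangular matrix, so $\operatorname{span}\{T_n^{(k)} : n \in B_k\} = \operatorname{span}\{x_n x_n^* : n \in B_k\}$. Since the operators $zz^*$ ($z \in \bC^k$) span $M_k(\bC)$ and $M_{k-1}(\bC)$ has codimension $2k-1 = |B_k|$ in $M_k(\bC)$, I can choose the directions of the $x_n$, $n \in B_k$, so that the images of the $x_n x_n^*$ form a basis of $M_k(\bC)/M_{k-1}(\bC)$; together with the observation that $T_n^{(k)} - T_n^{(k-1)} \in \operatorname{span}\{x_i x_i^* : i \in B_k\}$ for $n \le (k-1)^2$, a straightforward induction then yields $\operatorname{span}\{T_n^{(k)} : n \le k^2\} = M_k(\bC)$ for every $k$ (base case: $x_1$ a small multiple of $e_1$).

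The remaining and, I expect, most delicate step is to control the tails $T_n - T_n^{(k)}$ well enough to upgrade these exact statements about the $T_n^{(k)}$ to norm-density of $\bigcup_m M_m(\bC)$ inside $\overline{\operatorname{span}}\{T_n\}$. Fixing $k$, since $\{T_n^{(k)} : n \le k^2\}$ is a basis of $M_k(\bC)$ there is a constant $C_k$ with $\max_n|c_n| \le C_k\|M\|$ whenever $M = \sum_{n\le k^2}c_n T_n^{(k)}$; thus for $M \in M_m(\bC) \subseteq M_k(\bC)$ with $k \ge m$, the operator $S = \sum_{n\le k^2}c_n T_n$ lies in $\operatorname{span}\{T_n\}$ and satisfies $\|S - M\| \le C_k\|M\|\big(\sum_{n\le k^2}\|A_n\|\big)\sum_{i>k^2}\|x_i\|^2$. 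The quantity $\varepsilon_k := C_k\sum_{n\le k^2}\|A_n\|$ is determined once the blocks $B_1,\dots,B_k$ have been chosen, so when selecting the later blocks I would, in addition to requiring $\|x_n\|\le 2^{-(n+1)}$, impose $\sum_{i\in B_{k'}}\|x_i\|^2 \le 2^{-k'}\min_{1\le k<k'}\varepsilon_k^{-1}$, forcing $\sum_{i>k^2}\|x_i\|^2 \le 2^{-k}\varepsilon_k^{-1}$ and hence $\|S - M\| \le 2^{-k}\|M\|$. It follows that the norm closure of $\operatorname{span}\{T_n\}$ contains every $M_m(\bC)$, hence all compact operators on $\ell^2(\bN)$, hence $\operatorname{span}\{T_n\}$ is weakly dense in $\B(\ell^2(\bN))$. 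Since $A_n \in \V$, we have $V^*(\V \oplus 0_H)V \supseteq \operatorname{span}\{T_n\}$, so Lemma~\ref{lem:dilation} produces an infinite quantum clique for $\V$.
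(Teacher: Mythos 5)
Your proof is correct and follows essentially the same route as the paper's: place $x_n\in\operatorname{span}\{e_1,\dots,e_m\}$ for $(m-1)^2<n\le m^2$, show the finite truncations $T^{(k)}_n$ ($n\le k^2$) span $M_k(\bC)$, and impose decay on later blocks so the tails are negligible, giving norm-density of $\bigcup_m M_m(\bC)$ in the closure of $\operatorname{span}\{T_n\}$ and hence weak density, then invoke Lemmas~\ref{lem:isometry} and~\ref{lem:dilation}. The only real difference is your bookkeeping for the span claim: the block triangularity coming from $\lambda_i^n=0$ ($i<n$), $\lambda_n^n=1$ plus the count $\dim\bigl(M_k(\bC)/M_{k-1}(\bC)\bigr)=2k-1=|B_k|$ replaces the paper's inductive maintenance of linear independence of both $(x_ix_i^*)$ and the truncations via a scaling/perturbation argument, but this is a cosmetic reorganization of the same construction.
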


\begin{proof}
Let $(e_n)$ denote the standard orthonormal basis for the Hilbert space $\ell^2(\bN)$. We identify the $m \times m$ matrices $M_m(\bC)$ with the set of operators $T \in \B(\ell^2(\bN))$ satisfying $\langle T e_i, e_j \rangle = 0$ for $\max\{i,j\} > m$. For $n \geq 1$ and $i \geq 1$, let $\alpha_{ii}^n = \langle A_n h_i, h_i \rangle$, so that
\[
A_n = \sum _{i \geq 1} \alpha_{ii}^n h_i h_i^*,
\]
where the sum is taken in the weak topology.

We will inductively construct a sequence $(x_n)$ of nonzero vectors in $\ell^2(\bN)$ satisfying very restrictive norm conditions. Also, for $n \geq 1$ and $m \geq 1$ satisfying $(m-1)^2 < n \leq m^2$, we will require that $x_n \in \operatorname{span}\{ e_1,\ldots,e_m \}$. Finally, we will require that the rank one operators $x_1 x_1^*,\ldots,x_n x_n^*$ are linearly independent and that the operators $A_1^n,\ldots,A_n^n$ are linearly independent, where for $1 \leq j \leq n$, $A_j^n$ is defined by
\[
A_j^n = \sum_{1 \leq i \leq n} \alpha_{ii}^j x_i x_i^* \in M_m(\bC).
\]

For $k = 1$, we can take $x_1 = 2^{-2} e_1$. For $k \geq 2$, suppose $x_1,\ldots,x_{k-1}$ have been chosen and let $m \geq 1$ satisfy $(m-1)^2 < k \leq m^2$. Since $M_m(\bC)$ is spanned by positive rank one matrices, there is a vector $x_k \in \operatorname{span}\{e_1,\ldots,e_m\}$ such that $x_k x_k^* \notin \operatorname{span}\{x_1 x_1^*,\ldots, x_{k-1} x_{k-1}^* \}$. Observe that for $1 \leq j \leq k$, $A_j^k = A_j^{k-1} + \alpha_{kk}^j x_k x_k^*$. In particular, $A_k^k = x_k x_k^*$.

By assumption, $A_1^{k-1},\ldots,A_{k-1}^{k-1}$ are linearly independent. Since small perturbations of finitely many linearly independent matrices are linearly independent, we can scale $x_k$ if necessary to ensure that $A_1^k,\ldots,A_{k-1}^k$ are linearly independent.

We claim that $A_1^k,\ldots,A_k^k$ are linearly independent. To see this, suppose there are scalars $\beta_1,\ldots,\beta_k \in \bC$ such that $\sum_{j=1}^k \beta_j A_j^k = 0$. Then 
\[
x_k x_k^* = - \sum_{j=1}^{k-1} \beta_j A_j^{k-1} - \sum_{j=1}^{k-1} \beta_j \alpha^j_{kk} x_k x_k^*.
\]
The linear independence of $x_1x_1^*, \ldots, x_k x_k^*$ implies $\sum_{j=1}^{k-1} \beta_j A_j^{k-1} = 0$ and $\sum_{j=1}^{k-1} \beta_j \alpha^j_{kk} = -1$, which contradicts the linear independence of $A_1^{k-1},\ldots,A_{k-1}^{k-1}$. 

By construction, for $m \geq 1$, $A_1^{m^2},\ldots,A_{m^2}^{m^2}$ span $M_m(\bC)$. For $1 \leq r,s \leq m$, let $E_{rs} = e_r e_s^*$. Then there are scalars $\gamma_1^{rs,m^2},\ldots,\gamma_{m^2}^{rs,m^2} \in \bC$ such that
\[
E_{rs} = \sum_{j=1}^{m^2} \gamma_j^{rs,m^2} A_j^{m^2}.  
\]
For $k > m^2$, we may further scale $x_k$ if necessary, while preserving the linear independence of $A_1^k,\ldots,A_k^k$, to ensure that
\[
\|x_k\|^2 \leq \frac{1}{2^{k+1} m \sum_{j=1}^{m^2} |\gamma_j^{rs,m^2}| \|A_j\|}.
\]
Proceeding inductively, we obtain the sequence $(x_n)$ as desired.

By Lemma \ref{lem:dilation} there is an isometry $V : \ell^2(\bN) \to H \oplus H$ such that
\[
V^*(A_k \oplus 0_H)V = \sum_{i \geq 1} \alpha_{ii}^k x_i x_i^*,
\]
where the sum is taken in the weak topology. For $m \geq 1$ and $1 \leq r,s \leq m$, we estimate
\begin{align*}
\left\| \vphantom{\sum_{j=1}^{m^2}} E_{rs} - \right. & \left. \sum_{j=1}^{m^2} \gamma_j^{rs,m^2} V^* A_j V \right\| \\
&= \left\| E_{rs} - \sum_{j=1}^{m^2} \gamma_j^{rs,m^2} A_j^{m^2} - \sum_{j=1}^{m^2} \gamma_j^{rs,m^2} \sum_{k>m^2} \alpha_{kk}^j x_k x_k^* \right\| \\
&= \left\| \sum_{j=1}^{m^2} \gamma_j^{rs,m^2} \sum_{k>m^2} \alpha_{kk}^j x_k x_k^* \right\| \\
&\leq \sum_{j=1}^{m^2} |\gamma_j^{rs,m^2}| \|A_j\|  \sum_{k>m^2} \|x_k\|^2 \\
&\leq \frac{1}{m}.
\end{align*}
It follows that $E_{rs}$ belongs to the norm closure of $V^* \V V$. Hence the weak closure of $V^* \V V$ contains all of the finite rank operators in $\B(\ell^2(\bN))$. In particular, it is weakly dense in $\B(\ell^2(\bN))$. We now conclude from Lemma \ref{lem:isometry} that $\V$ has an infinite quantum clique.
\end{proof}

\begin{prop} \label{prop:diagonalizable}
Let $H$ be a Hilbert space of countably infinite dimension and let $\V \subseteq \B(H)$ be a diagonalizable operator system. If the dimension of $\V$ is finite, then either it has an infinite quantum anticlique or it has an infinite quantum obstruction. Otherwise, if the dimension of $\V$ is infinite, then it has an infinite quantum clique.
\end{prop}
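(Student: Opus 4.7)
The plan is to split into the two cases of the statement and simply chain the four lemmas already established in Section~\ref{sec:diagonalizable}.

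When $\dim \V < \infty$, Lemma~\ref{lem:diag-finite-dim-case} applies directly and delivers exactly the required dichotomy between an infinite quantum anticlique and an infinite quantum obstruction, so this case needs no further argument.

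Assume now $\dim \V = \infty$; the goal is to produce an infinite quantum clique. I would first pick any linearly independent sequence $(A_n)$ in $\V$ and feed it into Lemma~\ref{lem:diag-infinite-dim-case-1}. This yields a sequence $(B_n)$ in $\V$, an infinite rank projection $P \in \B(H)$, and an orthonormal basis $(f_n)$ for $PH$ such that the compressions $(PB_nP)$ are linearly independent and the diagonal entries satisfy $\langle B_n f_i, f_i \rangle = 0$ for $1 \leq i < n$ and $\langle B_n f_n, f_n \rangle = 1$. Since $\V$ is simultaneously diagonalizable, the construction in Lemma~\ref{lem:diag-infinite-dim-case-1} produces $(f_n)$ as a subsequence of a fixed orthonormal basis that diagonalizes all of $\V$; therefore every $B_n$ is diagonal in the basis $(f_n)$, and in particular the compressions $(PB_nP)$ are simultaneously diagonalized by $(f_n)$ on $PH$.

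At this point the operator system $P\V P \subseteq \B(PH)$, together with the sequence $(PB_nP)$ and the basis $(f_n)$, satisfies every hypothesis of Lemma~\ref{lem:diag-infinite-dim-case-2}. That lemma produces an infinite rank projection $Q \in \B(PH)$ such that $Q(P\V P)Q$ is weakly dense in $Q\B(PH)Q$. Extending $Q$ by zero on $H \ominus PH$, we may regard $Q$ as a projection on $H$ with $Q \leq P$, so that $Q\V Q = Q(P\V P)Q$ and $Q\B(H)Q = Q\B(PH)Q$; hence $Q\V Q$ is weakly dense in $Q\B(H)Q$, and $Q$ is an infinite quantum clique for $\V$. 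All of the real work has been absorbed into the earlier lemmas, so I do not expect any serious obstacle in this reduction; the only point that needs to be explicitly noted is that the basis $(f_n)$ produced by Lemma~\ref{lem:diag-infinite-dim-case-1} genuinely diagonalizes each $B_n$, and this is transparent from its construction.
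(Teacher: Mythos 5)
Your proposal is correct and follows essentially the same route as the paper's own proof, which also handles the finite-dimensional case by Lemma~\ref{lem:diag-finite-dim-case} and the infinite-dimensional case by chaining Lemma~\ref{lem:diag-infinite-dim-case-1} with Lemma~\ref{lem:diag-infinite-dim-case-2}. You merely spell out two small points the paper leaves implicit --- that the basis $(f_n)$ produced by Lemma~\ref{lem:diag-infinite-dim-case-1} simultaneously diagonalizes the compressions $(PB_nP)$ (so the hypotheses of Lemma~\ref{lem:diag-infinite-dim-case-2} hold for $P\V P$), and that an infinite quantum clique for $P\V P$ extends to one for $\V$ --- and both observations are accurate.
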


\begin{proof}
If the dimension of $\V$ is infinite, then applying Lemma \ref{lem:diag-infinite-dim-case-1} followed by Lemma \ref{lem:diag-infinite-dim-case-2}, we conclude that $\V$ has an infinite quantum clique. Otherwise, if the dimension of $\V$ is finite, then Lemma \ref{lem:diag-finite-dim-case} implies that either $\V$ has an infinite quantum anticlique or it has an infinite quantum obstruction.
\end{proof}

\section{Main results} \label{sec:main}

\begin{thm} \label{thm:main-end}
Let $\V \subseteq \B(H)$ be an operator system on an infinite dimensional Hilbert space $H$. Then it has either an infinite quantum clique, an infinite quantum anticlique or an infinite quantum obstruction.
\end{thm}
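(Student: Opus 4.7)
The plan is to combine the two main results of the preceding sections, Proposition \ref{prop:pre-diagonalizable} and Proposition \ref{prop:diagonalizable}. I would dichotomize on whether $\V$ admits an infinite rank compression that is diagonalizable. If so, then $P \V P$ is directly a diagonalizable operator system on the infinite dimensional space $PH$. If not, then the hypothesis of Proposition \ref{prop:pre-diagonalizable} is satisfied, so either $\V$ already has an infinite quantum clique (and we are done), or there is an infinite rank projection $P$ and an operator system $\V_0 \subseteq \V$ such that the compression $P \V_0 P$ is infinite dimensional and diagonalizable. In either remaining case, I am left to analyze a diagonalizable operator system $\W$ (either $P \V P$ or $P \V_0 P$) acting on the infinite dimensional Hilbert space $PH$.

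To apply Proposition \ref{prop:diagonalizable}, which is stated for a countably infinite dimensional Hilbert space, I need to further compress to a separable subspace of $PH$. Fix a basis $(h_i)_{i \in I}$ of $PH$ that simultaneously diagonalizes $\W$. If $\W$ is finite dimensional, any countably infinite subset of $I$ yields a compression which remains diagonalizable. If $\W$ is infinite dimensional, fix a linearly independent sequence $(W_n)$ in $\W$ and select indices $i_1, i_2, \ldots$ successively so that the restricted diagonals $(\langle W_n h_{i_k}, h_{i_k} \rangle)_k$ remain linearly independent; at each step only finitely many constraints must be avoided, and this is possible because infinite dimensionality of $\W$ precludes any finite index set from separating all of its elements. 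Let $P_0 \leq P$ be the projection onto the closure of the span of the chosen basis vectors.

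Now apply Proposition \ref{prop:diagonalizable} to $P_0 \W P_0 \subseteq \B(P_0 H)$. If $P_0 \W P_0$ is infinite dimensional, this yields an infinite quantum clique $Q \leq P_0$; if it is finite dimensional, this yields an infinite quantum anticlique or an infinite quantum obstruction $Q \leq P_0$. Since $Q \leq P$, the identity $Q A Q = Q P A P Q$ holds for every $A \in \B(H)$, so $Q \W Q = Q \V Q$ when $\W = P \V P$, and $Q \W Q = Q \V_0 Q \subseteq Q \V Q$ when $\W = P \V_0 P$. In all three cases, the defining property of an infinite quantum clique, anticlique, or obstruction transfers from $\W$ to $\V$: weak density in $Q \B(H) Q$ and the structural properties of an obstruction are inherited by any containing operator system, and equality with $\bC Q$ transfers exactly when $Q \W Q = Q \V Q$ (which happens in the anticlique/obstruction case where $\W = P \V P$). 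Hence $Q$ witnesses one of the three alternatives for $\V$.

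The main technical obstacle in this argument is really just the reduction to countable dimension, which requires a careful inductive choice of diagonalizing basis vectors to preserve infinite dimensionality of the compressed operator system; all the substantive work has already been carried out in the preceding two propositions, and what remains is a matching of hypotheses and a verification that the defining properties descend along compressions.
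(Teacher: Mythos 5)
Your proof is correct and takes essentially the same route as the paper: dichotomize on whether $\V$ admits a diagonalizable infinite rank compression, then combine Proposition \ref{prop:pre-diagonalizable} with Proposition \ref{prop:diagonalizable}. The one substantive difference is the handling of separability: the paper simply compresses to an arbitrary countably infinite dimensional subspace at the very start, so that both propositions are subsequently applied to a system on a separable space, and the resulting projection transfers back to $\V$ automatically; your version defers the reduction and compensates with the inductive choice of diagonalizing basis vectors that preserves linear independence of the compressions. That extra care is sound but not needed if one reduces first. One small wrinkle in your final paragraph: the structural properties of an obstruction (in particular, finite dimensionality of the compression) are \emph{not} inherited by a containing operator system, so the phrase suggesting obstruction structure passes by containment is literally false. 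Fortunately you also observe that the anticlique and obstruction alternatives only arise in the branch where $\W = P\V P$, so $Q\W Q = Q\V Q$ holds exactly and the transfer is by equality rather than containment; with that reading the argument goes through.
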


\begin{proof}
By compressing to a subspace, we can assume that $H$ is countably infinite dimensional. If there is an infinite rank projection $P$ such that the compression $P \V P$ is diagonalizable, then we can replace $\V$ with $P \V P$ and apply Proposition \ref{prop:diagonalizable}. Otherwise, by Proposition \ref{prop:pre-diagonalizable}, either $\V$ has an infinite quantum clique, or there is an infinite rank projection $P \in \B(H)$ and an operator subsystem $\V' \subseteq \V$ such that the compression $P \V' P$ is infinite dimensional and diagonalizable. In the latter case, applying Proposition \ref{prop:pre-diagonalizable} gives the desired result.
\end{proof}

The following corollary is an immediate consequence of Definition~\ref{defn:main}, Definition \ref{defn:almost-anticlique} and Theorem \ref{thm:main-end}.

\begin{cor}
Let $\V \subseteq \B(H)$ be an operator system on an infinite dimensional Hilbert space $H$. Then it has either an infinite quantum clique or an infinite quantum almost anticlique.
\end{cor}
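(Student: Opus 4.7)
The plan is to deduce this corollary directly from Theorem~\ref{thm:main-end}, which has already been established as the main result. That theorem guarantees one of three alternatives for any operator system $\V \subseteq \B(H)$ on an infinite dimensional Hilbert space: an infinite quantum clique, an infinite quantum anticlique, or an infinite quantum obstruction. So the only thing to verify is that the last two alternatives both imply the existence of an infinite quantum almost anticlique in the sense of Definition~\ref{defn:almost-anticlique}.

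First I would check the anticlique case. If $P$ is an infinite quantum anticlique, then by Definition~\ref{defn:main}, $P\V P = \bC P$. In particular, $P\V P$ is finite dimensional (in fact one dimensional), and every element of $P\V P$ has the form $\lambda P$ for some $\lambda \in \bC$, which we can trivially write as $\lambda P + 0$, with $0$ a (trivially) compact operator. Hence $P$ satisfies the conditions of Definition~\ref{defn:almost-anticlique} and is an infinite quantum almost anticlique.

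Next I would check the obstruction case. If $P$ is an infinite quantum obstruction, then by Definition~\ref{defn:main}, $P\V P$ is finite dimensional and every operator in $P\V P$ can be written as the sum of a scalar multiple of $P$ and a compact operator. These are exactly the two conditions required by Definition~\ref{defn:almost-anticlique}, so $P$ is again an infinite quantum almost anticlique. (The additional property that $P\V P$ contains a self-adjoint compact operator with range dense in $PH$ is not needed for the weaker almost anticlique conclusion and is simply discarded.)

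There is no real obstacle here: the corollary is a bookkeeping consequence of the definitions, and the only substantive input is Theorem~\ref{thm:main-end} itself. Combining the two cases above with the quantum clique alternative from Theorem~\ref{thm:main-end} yields the desired dichotomy.
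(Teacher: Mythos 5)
Your proof is correct and matches the paper's approach: the paper itself states the corollary is an immediate consequence of Definitions~\ref{defn:main} and~\ref{defn:almost-anticlique} together with Theorem~\ref{thm:main-end}, and you have simply spelled out the straightforward verification that both an infinite quantum anticlique and an infinite quantum obstruction satisfy the defining conditions of an infinite quantum almost anticlique.
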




\end{document}